\numberwithin{equation}{section}
\newcommand{\QQ}{\mathbb{Q}}
\newcommand{\ZZ}{\mathbb{Z}}
\newcommand{\xx}{\mathbf{x}}
\newcommand{\yy}{\mathbf{y}}
\DeclareMathOperator{\even}{even}
\DeclareMathOperator{\odd}{odd}
\renewcommand{\leq}{\leqslant}
\renewcommand{\geq}{\geqslant}
\title{Random Diophantine equations of large degree}
\author{Tim Browning}
\address{ISTA \\ Am Campus 1 \\ 3400 Klosterneuburg \\ Austria}
\email{tdb@ist.ac.at}
\author{Will Sawin}
\address{Princeton University\\
Fine Hall\\ 304 Washington Rd\\
Princeton NJ 08540\\ USA}\email{wsawin@math.princeton.edu}
\subjclass[2010]{11D45 (11G50, 11P21, 14G05)}
\keywords{Hasse principle, Fano hypersurfaces, rational points, heights}
\begin{document}

\begin{abstract}
Among the set of hypersurfaces of degree $d$ and dimension $\ell$ defined by the vanishing of a homogeneous polynomial with coefficients in the set $\{-1, 1\}$, we investigate the  probability that a hypersurface contains a rational point as $d$ and $\ell$ tend to $\infty$.
\end{abstract}

\maketitle

\setcounter{tocdepth}{1}
\tableofcontents

\thispagestyle{empty}

\newtheorem{theorem}{Theorem}[section]
\newtheorem{lemma}[theorem]{Lemma}
\newtheorem{con}[theorem]{Conjecture}

\theoremstyle{definition}
\newtheorem*{ack}{Acknowledgements}
\newtheorem{definition}[theorem]{Definition}

\section{Introduction}

Questions about the arithmetic properties of randomly chosen polynomials with integer coefficients
are of central concern in number theory. 
In the simplest model, the {\em large box model}, one fixes the degree of the polynomial 
and allows the coefficients to vary uniformly in $\{-H,\dots,H\}$, as $H\to \infty$.
In the {\em large degree model}, on the other hand, one constrains the coefficients to be in 
 $\{-1,+1\}$, say,  and allows the degree of the polynomial to grow. 
It has been known for a long time that  random polynomials $f\in \mathbb{Z}[x]$ 
are irreducible over $\mathbb{Q}$, when adhering to the large box model.  The best known error term is due to Kuba \cite{kuba}, who  shows that the probability of $f$ being reducible is $O_d(H^{-1})$, for fixed degree $d=\deg(f)\geq 3$. The  large degree model is significantly harder to analyse, and we only have a conditional treatment. Thus, 
under the assumption of the Riemann Hypothesis  for Dedekind zeta functions, 
the question of irreducibility in  the large degree model has been dealt with  by Breuillard and Varj\'u \cite{irred}.

In this paper we shall be concerned with the Hasse principle for 
hypersurfaces $V\subset \mathbb{P}^{n}$ defined by 
randomly chosen homogeneous 
polynomials 
$f\in \mathbb{Z}[x_0,\dots,x_n]$ of degree $d$. 
In the large box model, 
this topic was 
investigated by Poonen and Voloch~\cite{PV}.  In 
\cite[Conjecture 2.2.(ii)]{PV} it is asserted that in the Fano regime  $n \geq d$, $100\%$ of the hypersurfaces should satisfy the Hasse principle.  For the range
 $n < d-1$, corresponding to  hypersurfaces of general type, 
it is predicted in \cite[Conjecture 2.2.(i)]{PV} that $0\%$ of the hypersurfaces should have a rational point.
The first  of these conjectures is settled for  hypersurfaces of dimension at least $3$
in joint work of the authors with Le Boudec  \cite{random}.

We shall address the existence of rational points  in  the large degree model. 
Let $d, n \geq 2$ and let $N_{d,n} = \binom{n+d}{d}$ be the number of monomials of degree $d$ in $n+1$ variables. Let  $\mathbb{B}_{d,n}$ be the set of hypersurfaces in $\mathbb{P}^n$ defined by the vanishing of a homogeneous polynomial of degree $d$ with coefficients in the set $\{-1, 1\}$. We thus have
\begin{equation}
\label{Cardinality Bdn}
\# \mathbb{B}_{d,n} = 2^{N_{d,n}-1}.
\end{equation}
We shall be interested in the ratio
\begin{equation*}
r_{d,n} = \frac{\# \{ V \in \mathbb{B}_{d,n} : V(\mathbb{Q}) \neq \emptyset \}}{\# \mathbb{B}_{d,n}},
\end{equation*}
which  measures the density of  hypersurfaces $V\in \mathbb{B}_{d,n} $ with rational points. 
The following result describes the size  of $r_{d,n}$, as $d\to \infty$, and 
is rather close to covering  the entire  Fano range.

\begin{theorem}
\label{Theorem all d}
For $n \geq d+\log d$, we have
\begin{equation*}
r_{d,n} = 1+ O \left( \frac1{d^{1/4}} \right),
\end{equation*}
where the implied constant is absolute.
\end{theorem}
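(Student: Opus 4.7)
The strategy is the second moment method applied to a family of candidate rational points with coordinates in $\{-1, 0, 1\}$. For each subset $S \subseteq \{0, 1, \ldots, n\}$ and each sign vector $\epsilon \in \{-1, +1\}^S$, let $x(S, \epsilon) \in \ZZ^{n+1}$ denote the point with $i$-th coordinate $\epsilon_i$ if $i \in S$ and $0$ otherwise. Let $k$ be the smallest positive integer such that $N_k := \binom{k+d-1}{d}$, the number of degree-$d$ monomials in $k$ variables, is even. By Lucas' theorem, $k = 2^{\nu_2(d)} + 1 \leq d+1$, with equality precisely when $d$ is a power of $2$. I restrict attention to subsets $S$ with $|S| = k$ and set $X = \sum_{|S|=k,\,\epsilon} \mathbf{1}_{f(x(S, \epsilon)) = 0}$.

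By the symmetry of the distribution of the coefficients $a_m$, the substitution $b_m := a_m \epsilon_m(x(S, \epsilon))$ shows that $f(x(S, \epsilon))$ has the same distribution as a sum of $N_k$ i.i.d.\ Rademacher random variables, so $p := \Pr[f(x(S, \epsilon)) = 0] = \binom{N_k}{N_k/2} 2^{-N_k} \asymp N_k^{-1/2}$ and $\EE[X] = \binom{n+1}{k} 2^k p$. In the worst case $k = d+1$, Stirling gives $2^{d+1} p \asymp d^{1/4}$; combined with $\binom{n+1}{d+1} \geq 1$ from the hypothesis $n \geq d$, this yields $\EE[X] \gg d^{1/4}$, and the extra $\log d$ in $n \geq d + \log d$ provides additional room for varying $S$.

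The crux is the variance estimate. For two test points $x = x(S, \epsilon)$ and $x' = x(S', \epsilon')$, the pair $(f(x), f(x'))$ decomposes as $(V + U, V' + U')$, where $V$ and $V'$ are Rademacher sums built from the $a_m$ indexed by monomials supported on $S \cap S'$ (with signs depending on $x, x'$), while $U$ and $U'$ use disjoint coefficient sets (monomials supported on $S$ or $S'$ that meet $S \setminus S'$ or $S' \setminus S$ respectively); monomials meeting both $S \setminus S'$ and $S' \setminus S$ vanish at both points. A local central limit theorem for Rademacher sums then estimates $\Pr[f(x) = 0, f(x') = 0]$ as $p^2$ times a correlation factor depending on $|S \cap S'|$. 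Summing these covariances over all pairs stratified by $t := |S \cap S'|$ and using the bound $\binom{n+1}{k} \binom{k}{t} \binom{n+1-k}{k-t}$ for the number of pairs with prescribed overlap should yield $\mathrm{Var}[X] = O(\EE[X]^2 \cdot d^{-1/4})$, so that Chebyshev's inequality gives $\Pr[X = 0] = O(d^{-1/4})$.

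The main obstacle is the covariance sum: when $d$ is a power of $2$, the forced support size $k = d+1$ is large relative to the number of available indices, so typical pairs are heavily overlapping with correlations bounded away from zero. The hypothesis $n \geq d + \log d$ provides precisely the slack needed for pairs with smaller overlap to contribute enough mass to close the estimate at exponent $1/4$; for smaller $\nu_2(d)$, the bound will in fact be substantially sharper, and the exponent $1/4$ is dictated by this worst case.
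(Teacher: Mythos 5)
Your proposal is the same second-moment strategy the paper uses (Cauchy--Schwarz on the count of $\{-1,0,1\}$-points with support chosen so that $N_{d,\ell}$ is even), but there is one genuine divergence and two genuine gaps.

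The divergence is your choice of support size $k = 2^{\nu_2(d)}+1$ (equivalently $\ell = 2^{\nu_2(d)}$): you take the \emph{minimal} $\ell$ making $N_{d,\ell}$ even. The paper does this only when $d$ is a power of $2$ (where the minimum forces $\ell=d$); for all other $d$ it deliberately chooses $\ell\in(d/7,\,d/2]$ by picking the second-highest binary digit of $d$ (possibly nudged). This is not cosmetic. The paper's second-moment bound (Lemma~\ref{Lemma second moment}) has error $O\bigl((3/4)^{\min\{\ell,\mu_\ell\}}+\min\{d,n\}^{-1}+M_1^{-1}\bigr)$ with $\mu_\ell=(n-\ell)(\ell+1)/(n+1)$, and the $(3/4)^{\ell}$ factor --- coming from the bound $|1-2c_{d,\ell}(j)/N_{d,\ell}|\le 2^{-\min\{j,\ell+1-j\}}$ of Lemma~\ref{Lemma coefficients} together with the crude estimate $\Delta_i\le 1+O(2^{-(\ell-i)})$ in the $\Sigma^{(2)}$ analysis --- is useless once $\ell\ll\log d$. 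For $d$ odd your choice gives $\ell=1$ and $(3/4)^{\ell}=3/4$. Your intuition that the correlations are actually tiny in this regime is defensible (e.g.\ $\Delta_0=1$ exactly when $\ell=1$, and $1-2c_{d,1}(1)/N_{d,1}=0$ for $d$ odd), but establishing it would require sharper versions of Lemmas~\ref{Lemma coefficients} and of the $\Delta_i$ bound than the paper actually proves; you cannot simply cite ``the bound is sharper for smaller $\nu_2(d)$'' --- that is precisely what you would need to prove, and the argument the paper gives does \emph{not} supply it. The paper sidesteps the whole issue by keeping $\ell$ comparable to $d$.

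The second gap is that the covariance estimate --- the heart of the theorem --- is treated as a black box (``a local CLT for Rademacher sums... should yield''). In the paper this is Lemma~\ref{Lemma second moment}: several pages of exact computation splitting the pair sum into $\Sigma^{(1)}$ (same support, $j$ sign-flips), $\Sigma^{(2)}$ (partial overlap) and $\Sigma^{(3)}$ (disjoint support), then executing Vandermonde convolutions \eqref{Identity VDM'}, \eqref{Identity VDM2}, and finally invoking Hoeffding's inequality for sampling without replacement (Lemma~\ref{lem:prob}) to control the hypergeometric sum over the overlap size $i$. Stratifying only by $t=|S\cap S'|$ as you propose is insufficient: for fixed $t$ the joint probability depends essentially on the number $j$ of coordinates where the two sign vectors disagree (via $c_{d,\ell}(j)$), and the sum over $j$ must be carried out before any Gaussian heuristic becomes quantitative; moreover, when $c_{d,\ell}(j)$ is odd the joint probability is exactly $0$, a parity obstruction a soft LCLT will not see. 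A Berry--Esseen-type local limit theorem could probably replace the exact combinatorics, but bounds with explicit constants uniform in the block sizes $N_{d,i}$, $N_{d,\ell}-N_{d,i}$, $c_{d,\ell}(j)$ (some of which can be as small as $1$) would be needed, and the resulting error terms have to be summed against $\binom{n-\ell}{\ell-i}\binom{\ell+1}{i+1}\binom{i}{j}$ and beaten down to $O(d^{-1/4})$ in the worst case $\ell=d$, $n=d+\log d$, where $\mu_\ell\asymp\log d$ and the margin is razor-thin ($10\log(4/3)/11 > 1/4$, barely). None of this is present.

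In short: right framework, correct identification of the hard case ($d$ a power of $2$), correct first-moment asymptotics, but the second-moment bound is asserted rather than proved, and the choice of minimal support size would in fact require you to prove something \emph{stronger} than the paper's Lemma~\ref{Lemma second moment} for the small-$\ell$ cases you introduce.
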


Our next result shows that if we are ready to allow the integer $d$ to avoid an appropriate 
density $0$ set of integers, then the condition on $n$ can be  relaxed dramatically.

\begin{theorem}
\label{Theorem almost all d}
There exists a set $\mathcal{D}$ of density $1$ in the positive integers, such that for $d \in \mathcal{D}$ and 
$n \geq d^{1/2}\log d$, we have
\begin{equation*}
r_{d,n} = 1 + O \left( \frac{1}{\sqrt{d}}\right),
\end{equation*}
where the implied constant is absolute.
\end{theorem}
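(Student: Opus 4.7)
My approach is to adapt the proof of Theorem~\ref{Theorem all d} so that fewer variables suffice, paying for this by exploiting arithmetic structure in $d$ that is available for a density-$1$ set of degrees. The skeleton is the same: identify a structured collection of candidate rational points, then run a probabilistic (second-moment) argument showing that at least one of them is a zero of $f$.

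First, I would define $\mathcal{D}$ to consist of integers $d$ admitting a suitable arithmetic decomposition -- for instance, $d$ having a prime factor in a prescribed range, or a divisor of size $\asymp \sqrt d$, or a related density-$1$ condition on divisor structure. Standard tools of elementary analytic number theory (of the kind used by Ford, Tenenbaum, and others) make it routine to verify that such conditions can be arranged to hold on a density-$1$ set. For $d \in \mathcal{D}$ and $n \geq \sqrt d \log d$, I would build a family $\Sigma \subset \mathbb{P}^n(\QQ)$ of candidate points with coordinates in a bounded alphabet such as $\{-1, 0, 1\}$, supported on a fixed subset of $m+1 \leq n+1$ coordinates of size $m \asymp \sqrt d \log d$. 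The exact support and sign patterns are tuned to the factorization of $d$ supplied by the density-$1$ hypothesis.

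Next, writing $Z := \#\{\xi \in \Sigma : f(\xi) = 0\}$, I would estimate $\EE Z$ and $\mathrm{Var}(Z)$. For each $\xi \in \Sigma$, the integer $f(\xi)$ is a signed sum of the coefficients $\epsilon_\alpha \in \{-1, 1\}$ of $f$, with weights determined by $\xi^\alpha$. Anti-concentration, via a local central limit theorem or via Erd\H{o}s--Littlewood--Offord, gives $\Pr(f(\xi) = 0) \asymp 1/\sqrt{M_\xi}$, where $M_\xi$ counts the effective monomial contributions at $\xi$, provided a parity condition on $M_\xi$ is met. The arithmetic condition on $d$ is what guarantees that parity. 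Summing over $\xi \in \Sigma$ should then give $\EE Z \gg \sqrt d$ for an appropriate choice of $\Sigma$.

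The most delicate step, and the place where I expect the main work of the proof to lie, is the bound on $\mathrm{Var}(Z)$. For a pair $\xi, \xi' \in \Sigma$, the joint distribution of $(f(\xi), f(\xi'))$ is governed by those $\alpha$ for which $\xi^\alpha = \pm (\xi')^\alpha$ (the ``dependent'' part) versus those where this fails (the ``independent'' part). For generic pairs one expects approximate independence, giving covariance contributions of the order $1/M_\xi$; but ``resonant'' pairs, for which the two evaluation functionals on the space of $\pm 1$-polynomials are nearly proportional, can contribute substantially more. The density-$1$ hypothesis on $d$ should be used precisely to rule out or tightly control such resonances, by excluding the $d$ for which specific divisibility coincidences force many pairs to resonate. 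If this analysis yields $\mathrm{Var}(Z) \ll (\EE Z)^2/\sqrt d$, then Chebyshev's inequality gives $\Pr(Z = 0) = O(1/\sqrt d)$, which is the desired bound on $1 - r_{d,n}$.
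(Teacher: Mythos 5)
Your skeleton is the right one in outline (isolate a structured family of candidate points, estimate first and second moments, apply Cauchy--Schwarz/Chebyshev), but two of the three load-bearing choices are misdiagnosed, and the third is left unargued.

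\textbf{The density-one condition on $d$ is about the $2$-adic valuation, not divisor structure.} The set $\mathcal{D}$ in the paper is $\{d : v_2(d) \leq \log\log\log d\}$, and nothing like ``a prime factor in a prescribed range'' or ``a divisor of size $\asymp\sqrt d$'' would do. The reason is that the only obstruction being avoided is local insolubility at the prime $2$: if every candidate point has $\leq n+1$ nonzero coordinates, then solubility modulo $2$ forces some $N_{d,\ell}=\binom{d+\ell}{\ell}$ with $\ell\leq n$ to be even, and by Lemma~\ref{lem:N-even} this happens precisely when $d$ and $\ell$ share a binary digit~$1$. The smallest such $\ell$ is $2^{v_2(d)}$, so one needs $v_2(d)$ small to find a usable $\ell$ of size $O(\log d)$; multiplicative properties of $d$ are irrelevant. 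Your ``parity condition on $M_\xi$'' is, up to relabelling, exactly the condition ``$N_{d,\ell}$ is even,'' and the proof sets $\ell=2^{v_2(d)}+2^{\lfloor\log\log d/\log 2\rfloor+3}$ specifically to force a shared binary digit with $d$.

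\textbf{The support size is off by a lot.} You propose $m\asymp\sqrt d\log d$, essentially the full available coordinate set; the paper takes $\ell\asymp\log d$ (concretely $4\log d\leq\ell\leq 9\log d$) and lets the $\binom{n+1}{\ell+1}$ choices of support do the work of making $M_1$ large. With a fixed support of size $\asymp\sqrt d\log d$ and all of it active, $N_{d,m}^{1/2}$ swamps the count of candidate points and the first moment collapses; with $\ell\asymp\log d$ and $n\geq\sqrt d\log d$ one gets $M_1\gg d^{1/2}(\log d)^\ell/\ell^{\ell/2}$, which is far more than enough. You also need $\ell\leq d$ for Lemma~\ref{Lemma coefficients} to control the diagonal terms, and $n\gg\ell^2$ so that $\mu_\ell\geq\ell$; these are incompatible with $\ell\asymp\sqrt d\log d$.

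\textbf{The variance bound is asserted, not argued.} You write ``if this analysis yields $\mathrm{Var}(Z)\ll(\EE Z)^2/\sqrt d$'' and gesture at ``resonances,'' but the entire technical content of the paper's Lemma~\ref{Lemma second moment} is showing how to control the off-diagonal terms: one decomposes pairs $(x,y)\in U_{n,\ell}^2$ by the size of the overlap of their supports, evaluates the resulting joint counts by Vandermonde convolutions, and caps the total by a hypergeometric-tail bound (Lemma~\ref{lem:prob}, via Hoeffding). Your intuition that overlapping supports cause positive correlation is correct, but invoking a density-one hypothesis on $d$ to ``rule out resonances'' is not where the savings comes from: the savings comes from the combinatorics of support overlap, which is uniform in $d$. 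As written, this step is the main gap.

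The anti-concentration input you cite (Erd\H{o}s--Littlewood--Offord / local CLT) is also not needed: because all coefficients are exactly $\pm1$ and all candidate coordinates are $\pm1$, the count $\#\{V:x\in V\}$ is an exact binomial coefficient (equation~\eqref{eq:3.2}), so Stirling suffices.
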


It follows that in the range $n \geq d^{1/2}\log d$, the proportion of hypersurfaces in $\mathbb{B}_{d,n}$ which have rational points  approaches $100\%$, as $d \to \infty$ while running over an appropriate set of density $1$. It is rather  striking to see that so much of the general type range is handled in this result.  The most troublesome degrees  $d$ to handle are those whose  $2$-adic valuations are large and the set  $\mathcal{D}$ is chosen to avoid these.

Returning to the question of irreducibility, it follows 
from 
work of Bary-Soroker and Kozma \cite{bary} that 
the proportion of 
ternary degree $d$ forms 
that are irreducible over $\mathbb{Q}$
approaches  $100\%$ as $d\to \infty$. 
A form $f\in \mathbb{Z}[x_0,\dots,x_n]$ is irreducible over $\mathbb{Q}$ if 
$f(x_0,x_1,x_2,0,\dots,0)$ has this property. 
It therefore  follows that 
the proportion of 
hypersurfaces in  $\mathbb{B}_{d,n}$ that are irreducible over $\mathbb{Q}$
approaches  $100\%$ as $d\to \infty$, 
provided that $n\geq 2$. 
Unlike the case of univariate polynomials, this result is  unconditional. 
On the other hand, while the large box model version of the following conjecture is straightforward, 
it would be interesting to determine its validity in the large degree model.

\begin{con}
100\% of hypersurfaces $V\in \mathbb{B}_{d,n}$ are smooth, as $d\to \infty$.
\end{con}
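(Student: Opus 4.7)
The plan is to reduce smoothness of $V$ to smoothness of its reduction modulo a well-chosen prime $p=p(d)$, and then to estimate the probability of singularity point-by-point via a Fourier / Littlewood--Offord argument. Since the discriminant $\Delta(f)\in\ZZ$ is nonzero as soon as the reduction of $f$ modulo $p$ defines a smooth hypersurface over $\overline{\mathbb{F}_p}$, it suffices to exhibit, for each $d$, a prime $p=p(d)\to\infty$ such that a random $V\in\mathbb{B}_{d,n}$ is smooth modulo $p$ with probability tending to $1$. I would take $p$ to be the smallest prime not dividing $d$; since $\prod_{q\leq P}q = e^{(1+o(1))P}$ by the prime number theorem, this forces $p\leq (1+o(1))\log d$, so $p\to\infty$ as $d\to\infty$ while $p\nmid d$.

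With $p\nmid d$, the Euler relation $d\,f=\sum_i x_i\partial_i f$ shows that the singular locus of $V$ modulo $p$ equals the common zero set of $\partial_0 f,\ldots,\partial_n f$. For a fixed point $P\in\mathbb{P}^n(\mathbb{F}_{p^k})$, the map $(c_\alpha)\mapsto (\partial_i f(P))_{0\leq i\leq n}$ is a tuple of $n+1$ linear forms on the $\pm 1$-vector of coefficients, with values in $\mathbb{F}_{p^k}$, namely $L_i(c)=\sum_\alpha \alpha_i P^{\alpha-e_i}c_\alpha$. The goal is to show that whenever $P$ has no vanishing coordinate, the joint distribution of $(L_0,\ldots,L_n)$ is approximately uniform on $\mathbb{F}_{p^k}^{n+1}$, so the probability of simultaneous vanishing is $O(p^{-k(n+1)})$. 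By Fourier inversion, this reduces to bounding the characteristic function of the $\mathbb{F}_{p^k}$-valued linear form
\begin{equation*}
\sum_i t_i L_i(c)=\sum_\alpha c_\alpha\,P^\alpha\cdot\Bigl(\sum_i \frac{t_i\alpha_i}{P_i}\Bigr),
\end{equation*}
for each nonzero $(t_0,\ldots,t_n)\in\mathbb{F}_{p^k}^{n+1}$. Its coefficient of $c_\alpha$ vanishes only on a $1/p^k$-fraction of multi-indices, so the form has $\Theta(N_{d,n})$ nonzero coefficients, and a character-sum / Erd\H{o}s--Littlewood--Offord anti-concentration estimate in $\mathbb{F}_{p^k}$ supplies the required Fourier decay.

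Summing the per-point estimate over Galois orbits of closed points in $\mathbb{P}^n(\overline{\mathbb{F}_p})$ produces
\begin{equation*}
\mathrm{Prob}(V\text{ is singular mod }p)\leq \sum_{k\geq 1}\frac{|\mathbb{P}^n(\mathbb{F}_{p^k})|}{k}\cdot\frac{C}{p^{k(n+1)}}\ll \sum_{k\geq 1}\frac{1}{k\,p^k}\ll \frac{1}{p}\ll \frac{1}{\log d},
\end{equation*}
which tends to $0$. Points $P$ with at least one zero coordinate are handled by restricting $f$ to the corresponding coordinate subspace, where the restriction is again a random $\pm 1$ form in fewer variables, and iterating the argument. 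The main obstacle is the joint anti-concentration step itself: one needs a character-sum bound for $\sum_i t_i L_i$ that is uniform in both $P$ and $(t_0,\ldots,t_n)$, and in particular rules out the possibility that the nonzero coefficients $P^\alpha\sum_i t_i\alpha_i/P_i$ concentrate in a proper additive subgroup of $\mathbb{F}_{p^k}$ or that $P$ is defined over a small subfield. Carefully stratifying the argument over these degenerate configurations of $P$, and handling the indices $\alpha_i$ divisible by $p$, are the key technical issues.
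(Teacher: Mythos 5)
The statement you are attempting to prove is stated in the paper only as an open conjecture, with the explicit remark that ``it would be interesting to determine its validity in the large degree model.'' The paper contains no proof of it, so there is no argument of the authors' to compare yours against. Your sketch outlines a sensible plan of attack in the spirit of Poonen's Bertini theorem over finite fields: reduce modulo a prime $p\nmid d$ with $p\to\infty$ (and the choice $p\ll\log d$ via the prime number theorem is fine), then bound the probability of a singular $\overline{\mathbb{F}_p}$-point by summing local estimates over closed points of $\mathbb{P}^n_{\mathbb{F}_p}$. But what you have written is a plan, not a proof, and the step you yourself flag at the end is where the entire difficulty lies.

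Concretely, the argument hinges on a joint anti-concentration estimate for the tuple $\bigl(\partial_0 f(P),\dots,\partial_n f(P)\bigr)\in\mathbb{F}_{p^k}^{n+1}$ as $f$ ranges over random $\pm1$ forms, uniformly over all closed points $P$ and all $k$ up to a suitable cutoff, with an absolute implied constant; this is not a technicality but the substance of the problem. Over $\mathbb{F}_{p^k}$ with $k>1$ anti-concentration is genuinely delicate because the additive group has proper $\mathbb{F}_p$-subspaces, and you must rule out the coefficient vector of $\sum_i t_iL_i$ lying, or concentrating, in such a subspace, uniformly in $(t_0,\dots,t_n)$ and in $P$ (including points defined over small subfields). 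You also silently need a truncation in $k$: each per-point probability is a multiple of $2^{-N_{d,n}}$, so the bound $Cp^{-k(n+1)}$ with absolute $C$ cannot possibly hold once $p^{k(n+1)}\gg 2^{N_{d,n}}$; one must show the probability is exactly zero for large $k$, or control medium- and high-degree closed points by separate arguments, as is unavoidable even in Poonen's original setting. Finally, the reduction for points with vanishing coordinates is misstated: if $P_0=0$ and one writes $f=f_0+x_0f_1+\cdots$ with $f_j\in\mathbb{F}_p[x_1,\dots,x_n]$ homogeneous of degree $d-j$, then $P$ being singular for $V$ is equivalent to $(P_1:\cdots:P_n)$ being singular for $\{f_0=0\}$ \emph{together with} $f_1(P_1,\dots,P_n)=0$, so the induction runs over both $n$ and $d$ and carries an extra vanishing condition involving a lower-degree $\pm1$ form, rather than being a clean restriction-and-repeat. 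None of these gaps is visibly fatal, but none is resolved, and until the anti-concentration estimate is actually proved this remains, as the paper says, an open question.
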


The proof of Theorems \ref{Theorem all d} and 
\ref{Theorem almost all d} is rather combinatorial and will be completed in Section \ref{s:proofs}.
The key idea is
to study the moments 
$$
M_k(U) = \frac1{\# \mathbb{B}_{d,n}} \sum_{V \in \mathbb{B}_{d,n}} \left( \sum_{x \in U\cap V(\mathbb{Q})} 1 \right)^k,
$$
for $k\in \{1,2\}$, where $U$ is an appropriately chosen set of points in $\mathbb{P}^{n}(\mathbb{Q})$, always with coordinates drawn from the set $\{-1,0,+1\}$. One accesses $r_{d,n}$ through the Cauchy--Schwarz inequality, which yields
$M_1(U)^2\leq r_{d,n} M_2(U)$.  Providing estimates for the moments $M_k(U)$ is  the focus of Section \ref{sec:moments}.
Along the way we will need information about the density of locally soluble hypersurfaces in the large degree model, which is the focus of Section \ref{s:local}, in addition to  various estimates for binomial coefficients and sums involving them, for which the relevant facts are collected together in Section \ref{sec:binom}.

\begin{ack}
The authors are very grateful to Pierre Le Boudec, whose contributions would have merited co-authorship. At his request, however, he is not listed as a co-author, since he has chosen to pursue a career outside academia.
T.B.\ was
supported by  a grant from the IAS School of Mathematics and a FWF grant (DOI 10.55776/P36278).
W.S.\ was supported by  NSF grant DMS-2502029 and a Sloan Research Fellowship.

\end{ack}

\section{Local solubility}\label{s:local}

In this section we investigate the ratio
\begin{equation*}
r_{d,n}^{\mathrm{loc}} = \frac{\# \{ V \in \mathbb{B}_{d,n} : V(\mathbf{A}_{\mathbb{Q}}) \neq \emptyset \}}{\# \mathbb{B}_{d,n}},
\end{equation*}
where $\mathbf{A}_{\mathbb{Q}}$ is the ring of ad\`eles. This is  the probability that a hypersurface in $\mathbb{B}_{d,n}$ is everywhere locally soluble.  We begin with the following elementary fact about the parity of 
$N_{e,\ell} = \binom{e+\ell}{e}$. 

\begin{lemma}\label{lem:N-even}
$N_{e,\ell}$ is even if and only if 
$e$ and $\ell$ share a common digit $1$ in their binary expansions.
\end{lemma}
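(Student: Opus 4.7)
The plan is to reduce the statement to a classical $2$-adic fact about binomial coefficients, namely Kummer's theorem (or equivalently Lucas' theorem). I would choose Kummer, since here we only care about the parity and the combinatorial content is most transparent in that form.

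Step one: invoke Kummer's theorem, which asserts that the $2$-adic valuation $v_2\bigl(\binom{e+\ell}{e}\bigr)$ equals the number of carries that appear when one performs the addition $e+\ell$ in base $2$. In particular, $N_{e,\ell} = \binom{e+\ell}{e}$ is even if and only if at least one carry occurs in this addition.

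Step two: translate the carry condition back into the binary digits of $e$ and $\ell$ themselves. Write $e = \sum_i e_i 2^i$ and $\ell = \sum_i \ell_i 2^i$ with $e_i, \ell_i \in \{0,1\}$, and examine the school-book addition algorithm from the least significant bit upwards. A carry is generated at position $i$ exactly when $e_i + \ell_i + c_i \geq 2$, where $c_i$ is the carry-in from position $i-1$. If $e$ and $\ell$ share no common digit $1$, then at every position at most one of $e_i,\ell_i$ equals $1$ and, inductively, the carry-in stays $0$, so no carry is ever produced. Conversely, if $e$ and $\ell$ do share some position $i$ with $e_i = \ell_i = 1$, then at the \emph{smallest} such position one gets $e_i + \ell_i \geq 2$ irrespective of the carry-in, forcing a carry. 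Combining with step one yields the lemma.

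There is essentially no obstacle here: the only mild point to be careful about is the converse direction, where one must make sure that the existence of a common $1$-bit really does produce a carry, which is why I pick the \emph{least} such position (so that the carry-in is irrelevant). Alternatively, one could bypass Kummer and apply Lucas' theorem directly, noting that $\binom{e+\ell}{e} \equiv \prod_i \binom{(e+\ell)_i}{e_i} \pmod 2$ is odd iff $e_i \leq (e+\ell)_i$ for every $i$, which under the no-common-bit hypothesis forces $(e+\ell)_i = e_i + \ell_i$ and is otherwise violated; either route is a few lines.
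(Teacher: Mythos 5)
Your proof is correct and takes essentially the same route as the paper: both reduce the lemma to the classical $2$-adic valuation of $\binom{e+\ell}{e}$, the paper via Legendre's formula giving $v_2(N_{e,\ell}) = s_2(e) + s_2(\ell) - s_2(e+\ell)$, and you via Kummer's carry-counting theorem, which is just the same identity read off from the addition algorithm. The two are equivalent formulations of one fact, and your care in choosing the least common $1$-bit for the converse is a nice touch but not a genuine departure.
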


\begin{proof}
Given any integer $e \geq 1$, we let $s_2(e)$ denote the sum of the digits in the binary expansion of $e$. Let  $v_2(m)$ denote the $2$-adic valuation of any integer $m \geq 1$. Legendre's formula states that 
\begin{equation*}
v_2(e!) = e - s_2(e).
\end{equation*}
It follows that for any $e, \ell \geq 1$, we have
\begin{equation}
\label{Equality valuation}
v_2 ( N_{e,\ell} ) = s_2(e) + s_2(\ell) - s_2(e+\ell).
\end{equation}
As a result, we see that $N_{e,\ell}$ is even if and only if
\begin{equation*}
s_2(e) + s_2(\ell) > s_2(e+\ell),
\end{equation*}
which is equivalent to demanding that $e$ and $\ell$ share a common digit $1$ in their binary expansions.
\end{proof}

It will be convenient to have an alternative formulation of 
$N_{d,\ell}$, for any  $d, \ell \geq 1$. Let  $j \in \{1, \dots, \ell\}$. Then the number of monomials of degree $d$ in $\ell+1$ variables is equal to the sum over $k \in \{0, \dots, d\}$ of the number of monomials of degree $k$ in $j$ given variables multiplied by the number of monomials of degree $d-k$ in the remaining $\ell+1-j$ variables. In other words, we have
\begin{equation}
\label{Equality number monomials}
N_{d,\ell} = \sum_{k=0}^d \binom{k+j-1}{k} \binom{d-k+\ell-j}{d-k}.
\end{equation}
We now let $c_{d,\ell}(j)$ denote the number of monomials of degree $d$ which are odd with respect to $j$ given variables among $\ell+1$. Thus 
\begin{equation}
\label{Definition c}
c_{d,\ell}(j) = \sum_{\substack{k=0 \\ 2 \nmid k}}^d \binom{k+j-1}{k} \binom{d-k+\ell-j}{d-k}.
\end{equation}

We proceed by 
proving an  upper bound for 
$r_{d,n}^{\mathrm{loc}}$, which may prove to be of independent interest. 
It would be interesting to determine whether or not these upper bounds have matching lower bounds 
when $n\geq 2^{v_2(d)}$.

\begin{lemma}
Let $d,n \geq 2$. Then
\begin{equation*}
r_{d,n}^{\mathrm{loc}} \leq 
\begin{cases}
0 & \textrm{if } n < 2^{v_2(d)}, \\
1/2 & \textrm{if } n = 2^{v_2(d)}, \\
1 & \textrm{if } n > 2^{v_2(d)}.
\end{cases}
\end{equation*}
\end{lemma}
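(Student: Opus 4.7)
The strategy is to exhibit a $2$-adic obstruction. Since every coefficient of $f$ is $\pm 1$, its reduction mod $2$ is the single polynomial $\bar f=\sum_{|\alpha|=d}x^\alpha\in\mathbb F_2[x_0,\dots,x_n]$, independent of the choice of $f$. For a primitive $x\in\mathbb Z_2^{n+1}$ whose reduction $\bar x$ has exactly $j$ nonzero coordinates one computes $\bar f(\bar x)\equiv N_{d,j-1}\pmod 2$, and by Lemma \ref{lem:N-even} this vanishes iff $d$ and $j-1$ share a common binary $1$-digit. The smallest nonnegative integer sharing such a digit with $d$ is $2^{v_2(d)}$.

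The outer cases are then immediate. If $n<2^{v_2(d)}$ then every $j-1\in\{0,\dots,n\}$ has binary digits only at positions below $v_2(d)$, so $\bar f$ has no nontrivial $\mathbb F_2$-zero and $V(\mathbb Q_2)=\emptyset$, giving $r_{d,n}^{\mathrm{loc}}=0$. The bound $r_{d,n}^{\mathrm{loc}}\leq 1$ in the case $n>2^{v_2(d)}$ is trivial.

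The substantive case is $n=2^{v_2(d)}$, where the hypothesis $n\geq 2$ forces $v_2(d)\geq 1$, so both $d$ and $n$ are even. Here the only nontrivial zero of $\bar f$ is $(1,\dots,1)$, so any $\mathbb Q_2$-point of $V$ has all coordinates units; writing $x_i=1+2y_i$, one expands mod $4$ using $(1+2y_i)^{\alpha_i}\equiv 1+2\alpha_iy_i\pmod 4$ to arrive at the necessary condition
\[
f(1,\dots,1)+2\sum_{i=0}^n y_i\,\partial_if(1,\dots,1)\equiv 0\pmod 4.
\]
The key observation is that each $\partial_if(1,\dots,1)=\sum_\alpha c_\alpha\alpha_i$ is even: reducing mod $2$ (so $c_\alpha$ becomes $1$) and using symmetry, $\sum_\alpha\alpha_i$ is the same value $s$ for every $i$, and Euler's identity yields $(n+1)s=dN_{d,n}$; since $n+1$ is odd and $d$ is even, $s\equiv 0\pmod 2$. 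Hence the linear term vanishes mod $4$ and the condition collapses to $f(1,\dots,1)\equiv 0\pmod 4$.

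To finish, $f(1,\dots,1)=\sum_\alpha c_\alpha$ is a sum of $N_{d,n}$ independent uniform $\pm 1$ random variables, and an elementary parity count shows $\Pr[f(1,\dots,1)\equiv 0\pmod 4]=1/2$, giving $r_{d,n}^{\mathrm{loc}}\leq 1/2$. The main obstacle is the mod-$2$ vanishing of $\partial_if(1,\dots,1)$ in the boundary case; it is precisely here that the equality $n=2^{v_2(d)}$ enters, through the parity of $(n+1)$ and $d$, and it is this vanishing that upgrades the $2$-adic condition from a mod-$2$ constraint on $f$ to the finer mod-$4$ constraint on $f(1,\dots,1)$.
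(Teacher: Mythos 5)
Your argument is correct, and for the substantive case $n = 2^{v_2(d)}$ it takes a genuinely different route from the paper's. Both proofs reduce matters to showing that $f(x)\equiv f(1,\dots,1)\pmod 4$ whenever $x$ has all coordinates odd, so that local solubility at $2$ forces $f(1,\dots,1)\equiv 0\pmod 4$, a condition holding with probability exactly $1/2$. The paper establishes that congruence combinatorially: it writes $f(x)-f(1,\dots,1)$ as $\pm 2$ times the mod-$2$ reduction of the quantity $c_{d,n}(j)$ counting monomials odd in $j$ of the variables, and then proves $c_{d,n}(j)$ even by a two-case analysis on the parity of $j$, invoking the Kummer-type parity criterion (Lemma~\ref{lem:N-even}) for the binomial coefficients in \eqref{Definition c}. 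You instead perform a $2$-adic Taylor expansion $x_i=1+2y_i$, identify the obstruction with the linear term $2\sum_i y_i\,\partial_i f(1,\dots,1)\pmod 4$, and kill it by showing $\partial_i f(1,\dots,1)$ is always even: by symmetry each $\partial_i f(1,\dots,1)\equiv s\pmod 2$ for the single integer $s=\sum_{|\alpha|=d}\alpha_i$, and Euler's identity applied to $\sum_\alpha x^\alpha$ gives $(n+1)s=dN_{d,n}$, which forces $s$ even since $d$ is even and $n+1$ is odd. Your version is cleaner: it replaces the ad hoc binomial-parity case split with a structural identity, makes the role of the two parities ($d$ even, $n+1$ odd) transparent, and dispenses with the normalization of representatives to coordinates in $\{\pm1\}\pmod 4$ — the expansion works uniformly for any $y_i\in\mathbb Z_2$. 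The paper's version has the mild virtue of reusing the quantity $c_{d,\ell}(j)$ and Lemma~\ref{lem:N-even}, which recur later in the moment computations, so nothing extra needs to be introduced.
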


\begin{proof}
Let $d\geq 2$. 
It follows from Lemma 
\ref{lem:N-even} that  the least $n \geq 2$ such that $N_{d,n}$ is even is $n = 2^{v_2(d)}$.
Thus,  $N_{d,m}$ is odd for any  $m \in \{1, \dots, n \}$ if $n < 2^{v_2(d)}$. 
It follows that $V(\mathbb{Q}_2) = \emptyset$ for  any $V \in \mathbb{B}_{d,n}$, whence in 
fact $r _{d,n}^{\mathrm{loc}} = 0$.
If $n > 2^{v_2(d)}$ then we take the trivial  upper bound 
 $r _{d,n}^{\mathrm{loc}} \leq 1$.
 
Suppose now that  $n = 2^{v_2(d)}$. Since $n\geq 2$, it follows that both $n$ and $d$ are even. 
For the purposes of an upper bound it will suffice to consider the proportion of $V \in \mathbb{B}_{d,n}$ which admit points modulo $4$.  Let $f\in \mathbb{Z}[x_0,\dots,x_n]$ be a form of degree $d$ that defines $V$, which has coefficients in the set $\{-1,+1\}$.  
Thus there exist  $c_{i_0,\dots,i_n}\in \{-1,+1\}$ such that 
$$
f(x_0,\dots,x_n)=\sum_{\substack{
i_0,\dots,i_n\geq 0\\
i_0+\dots+i_n=d
}}c_{i_0,\dots,i_n} x_0^{i_0}\dots x_n^{i_n}.
$$
Since $N_{d,m}$ is odd for any $m<n$, it follows that $(1:1:\cdots:1)$ is the only  possible solution modulo  $2$.  Thus the  only points in $V(\mathbb{Q}_2)$ have representatives in which all coordinates are congruent to $-1$ or $+1$ modulo $4$, with at least one coordinate equal to $+1$ modulo $4$. 
We claim that 
$$
f(x_0,\dots,x_n)\equiv  f(1,\dots,1) \bmod{4},
$$
for any such point $(x_0:\cdots:x_n)\in V(\mathbb{Q}_2)$. To see this we may suppose  without loss of generality that  $x_0=\dots=x_{j-1}=-1$ and $x_{j}=\dots=x_n=+1$ for some $1\leq j\leq n$.
Then 
the claim is equivalent to 
$$
\sum_{\substack{
i_0,\dots,i_n\geq 0\\
i_0+\dots+i_n=d
}}c_{i_0,\dots,i_n} \left( 1-(-1)^{i_0+\dots+i_{j-1}} \right) \equiv 0\bmod{4},
$$
which in turn is equivalent to the condition 
$$
\sum_{\substack{
i_0,\dots,i_n\geq 0\\
i_0+\dots+i_{j-1}\equiv 1 \bmod{2}\\
i_0+\dots+i_n=d
}}c_{i_0,\dots,i_n}  \equiv 0\bmod{2}.
$$
Since $c_{i_0,\dots,i_n}\equiv 1 \bmod{2}$ for any choice of indices $i_0,\dots,i_n$, we see that 
the left hand side is congruent modulo $2$ to the number $c_{d,n}(j)$
of monomials of degree $d$ which are odd with respect to $j$ given variables among $n+1$. 
Hence, in the light of  \eqref{Definition c}, the condition is equivalent to demanding that 
$$
\sum_{\substack{k=0 \\ 2 \nmid k}}^d \binom{k+j-1}{k} \binom{d+n-j-k}{d-k} 
$$
is even.  
Suppose first that $j$ is even. Then $j-1$ is odd and  $\binom{k+j-1}{k}$ is even for all odd $k$, by Lemma 
\ref{lem:N-even}. Alternatively, if $j$ is odd then
$n-j$ is odd, since $n$ is even. It therefore follows from Lemma 
\ref{lem:N-even} that 
$\binom{d+n-j-k}{d-k} $ 
is  even for all odd $k$, since  $d$ is even. 
Now that we have established the claim, the
 statement of the lemma follows on noting that 50\% of hypersurfaces 
$V\in \mathbb{B}_{d,n}$ are defined by forms for which 
$f(1,\dots,1)\equiv 0 \bmod{4}$.
\end{proof}

\section{Sums of binomial coefficients}\label{sec:binom}

Stirling's formula   states that
\begin{equation}
\label{Stirling}
m! \sim (2 \pi)^{1/2} m^{m+1/2} e^{-m},
\end{equation}
as $m\to \infty$, which can be used to prove that 
\begin{equation}
\label{Estimate Stirling binom}
\binom{2m}{m} = \frac{2^{2m}}{\pi^{1/2} m^{1/2}} \left(1 + O \left( \frac1{m} \right) \right).
\end{equation}
Coupled with this, we have the upper bounds
\begin{equation}
\label{Upper bound Stirling binom}
\binom{2m}{m} \leq \frac{2^{2m}}{\pi^{1/2} m^{1/2}} \quad \text{ and } \quad 
\binom{m}{\lfloor m/2 \rfloor} \leq \frac{2^{m+1/2}}{\pi^{1/2} m^{1/2}},
\end{equation}
which are valid for any  $m \geq 1$ and can be established using induction. 

We will always adhere  to the traditional convention that 
$
\binom{r}{s} = 0,
$
for any $r \geq 0$ and $s \notin \{0, \dots, r\}$.  We  make rather heavy use of  Vandermonde convolution identities in our work. 
The first states that 
\begin{equation}
\label{Identity VDM}
\binom{r_1+r_2}{q} = \sum_{\alpha_1=0}^{r_1} \binom{r_1}{\alpha_1} \binom{r_2}{q-\alpha_1},
\end{equation}
for any $r_1, r_2, q \geq 0$. This is proved by considering the coefficients of $x^q$ in the binomial expansion of the identity $(1+x)^{r_1}(1+x)^{r_2}=(1+x)^{r_1+r_2}$. By instead considering  the coefficients of $x^{r_1+q}$ and appealing to the identity
$\binom{r_1}{a_1}=\binom{r_1}{r_1-a_1}$, one  arrives at the companion equality
\begin{equation}
\label{Identity VDM'}
\binom{r_1+r_2}{r_2-q} = \sum_{\alpha_1=0}^{r_1} \binom{r_1}{\alpha_1} \binom{r_2}{q+\alpha_1},
\end{equation}
for any $r_1,r_2\geq 0$ and $q\in \ZZ$.
We shall also need the higher convolution
\begin{equation}
\label{Identity VDM2}
\binom{r_1+r_2+r_3}{q} = \sum_{\alpha_1=0}^{r_1} \ \sum_{\alpha_2=0}^{r_2} \binom{r_1}{\alpha_1} \binom{r_2}{\alpha_2} \binom{r_3}{q-\alpha_1-\alpha_2},
\end{equation}
which is valid for any $r_1,r_2,r_3,q\geq 0$.

It will be convenient to let $D(0,1)$ denote the set of complex numbers with modulus at most $1$. We proceed by proving the following elementary result. 

\begin{lemma}
\label{Lemma series}
Let $h \geq 0$ and $z \in D(0,1)$. We have
\begin{equation*}
\sum_{d=0}^{\infty} \binom{d+h}{d} z^d = \frac1{(1-z)^{h+1}}.
\end{equation*}
\end{lemma}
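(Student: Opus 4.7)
The identity is the standard generating function for the ``negative binomial'' coefficients, so several routes are available. The cleanest route is to obtain it by repeated differentiation of the geometric series. The plan is to start with the base case
\[
\frac{1}{1-z} = \sum_{d=0}^{\infty} z^d,
\]
which holds for every $z$ with $|z|<1$, and then differentiate both sides $h$ times with respect to $z$. Term-by-term differentiation of a power series is legitimate on any compact subset of its open disc of convergence, so no analytic issue arises.

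After $h$ differentiations the left-hand side becomes $h!/(1-z)^{h+1}$, while the right-hand side becomes
\[
\sum_{d=h}^{\infty} d(d-1)\cdots(d-h+1)\, z^{d-h} = \sum_{d=0}^{\infty} \frac{(d+h)!}{d!}\, z^d
\]
after re-indexing $d\mapsto d+h$. Dividing through by $h!$ and recognising $(d+h)!/(d!\,h!) = \binom{d+h}{d}$ yields the claim.

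An equivalent route, more in keeping with the Vandermonde spirit of the rest of this section, is induction on $h$. The base case $h=0$ is again the geometric series. For the step one multiplies the inductive identity by $1/(1-z)=\sum_{d\geq 0} z^d$, forms the Cauchy product of the two absolutely convergent series, and uses the hockey-stick identity
\[
\sum_{k=0}^{d}\binom{k+h}{k} = \binom{d+h+1}{d},
\]
itself a direct consequence of iterated Pascal's rule (or of \eqref{Identity VDM} with $r_2=0$, $q=d$ after a short manipulation), to collapse the inner sum into the required coefficient. The only very mild subtlety is that the paper's notation $D(0,1)$ is the closed disc, whereas the series on the left only converges absolutely in the open disc; this is purely a bookkeeping point, as both sides are genuinely defined and equal on $|z|<1$, which is all that subsequent applications will use. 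I do not anticipate any real obstacle.
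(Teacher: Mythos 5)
Your proposal is correct. Your primary route --- differentiating the geometric series $h$ times, re-indexing, and dividing by $h!$ --- is genuinely different from the paper's argument, which proceeds by induction on $h$: the base case is again the geometric series, and the inductive step multiplies by $1/(1-z)$ and invokes the hockey-stick identity $\sum_{k=0}^{d}\binom{k+h}{k}=\binom{d+h+1}{d}$, obtained in the paper as the special case $j=\ell=h+1$ of \eqref{Equality number monomials}. Your secondary route (induction via the Cauchy product and the hockey-stick identity) is essentially the paper's proof. The differentiation argument is arguably shorter and needs no combinatorial input, at the mild cost of an appeal to term-by-term differentiation of power series; the induction argument stays purely algebraic and matches the Vandermonde-flavoured toolkit the section goes on to use. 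Your remark about $D(0,1)$ is also fair: as the paper defines it (modulus \emph{at most} $1$), the series on the left diverges on the boundary, so the statement is only correct on the open disc; since the lemma is subsequently used only for coefficient extraction and comparison, nothing downstream is affected, but the hypothesis ought strictly to read $|z|<1$.
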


\begin{proof}
We proceed by induction on $h$. The result holds in the case $h=0$, on executing the geometric series. 
We now assume that the result holds for some $h \geq 0$ and we note that the particular case $j = \ell = h+1$ of the equality \eqref{Equality number monomials} states that 
\begin{equation*}
\binom{d+h+1}{d} = \sum_{k=0}^d \binom{k+h}{k},
\end{equation*}
for any $d,h\geq 0$.
It follows that
\begin{align*}
\sum_{d=0}^{\infty} \binom{d+h+1}{d} z^d 
&=\sum_{d=0}^\infty \sum_{k=0}^{d} \binom{k+h}{k}z^d\\
&=\sum_{k=0}^\infty \binom{k+h}{k}
\sum_{d\geq k}
z^d\\
&=\frac{1}{1-z}\sum_{k=0}^\infty \binom{k+h}{k}
z^k.
\end{align*}
The proof follows on using the induction hypothesis.
\end{proof}

On comparing \eqref{Equality number monomials} and \eqref{Definition c}, 
it is natural to expect that the number $c_{d,\ell}(j)$ should usually be close to $N_{d,\ell}/2$. The  purpose of the following result is to show that this is indeed the case for most $j \in \{1, \dots, \ell\}$, provided 
that  $\ell \leq d$.

\begin{lemma}
\label{Lemma coefficients}
Let $1\leq j\leq \ell \leq d$. Then we have 
\begin{equation*}
\left|1- \frac{2c_{d,\ell}(j)}{N_{d,\ell}}\right|  \leq \left( \frac{1}{2} \right)^{\min \{ j, \ell+1-j \}}.
\end{equation*}
\end{lemma}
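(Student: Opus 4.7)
The plan is to interpret the signed sum
\[
S_d := N_{d,\ell} - 2c_{d,\ell}(j) = \sum_{k=0}^d (-1)^k \binom{k+j-1}{k}\binom{d-k+\ell-j}{d-k}
\]
as the coefficient of $x^d$ in the generating function $A^{(j)}(x) := 1/\bigl((1+x)^j(1-x)^{\ell+1-j}\bigr)$, and then bound this coefficient using positivity of coefficients. The substitution $x \mapsto -x$ shows $A^{(j)}(-x) = A^{(\ell+1-j)}(x)$, so $|S_d|$ is unchanged when one swaps $j$ with $\ell+1-j$. I may therefore assume $j \leq \ell+1-j$, so that $\min\{j,\ell+1-j\} = j$; writing $u = \ell+1-j \geq j$, the identity $A^{(j)}(x) = 1/\bigl((1-x^2)^j (1-x)^{u-j}\bigr)$ displays $A^{(j)}(x)$ as the product of two series with non-negative coefficients, and in particular $S_d \geq 0$ for every $d \geq 0$.

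The key step is to rewrite the obvious identity $(1+x)^j A^{(j)}(x) = (1-x)^{-u}$ as
\[
\bigl((1+x)^j - 1\bigr)\,A^{(j)}(x) \;=\; \frac{1}{(1-x)^u} - A^{(j)}(x).
\]
The left-hand side is now a product of two power series with non-negative coefficients, since $(1+x)^j - 1 = \sum_{k\geq 1}\binom{j}{k}x^k$ and $A^{(j)}(x)$ was just seen to have non-negative coefficients. Its $d$-th coefficient is therefore non-negative, and extracting this coefficient from the right-hand side yields the cornerstone bound
\[
S_d \;\leq\; [x^d]\frac{1}{(1-x)^u} \;=\; \binom{d+u-1}{u-1} \;=\; \binom{d+\ell-j}{\ell-j}.
\]

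A direct factorial manipulation finally gives
\[
\frac{N_{d,\ell}}{\binom{d+\ell-j}{\ell-j}} \;=\; \prod_{i=1}^{j}\frac{d+\ell-j+i}{\ell-j+i},
\]
and each factor here is at least $2$ provided $d \geq \ell - j + i$. The most restrictive case, $i = j$, is precisely the standing hypothesis $d \geq \ell$, so the product is at least $2^j$. Combining with the previous step yields $|S_d|/N_{d,\ell} \leq 2^{-j} = (1/2)^{\min\{j,\ell+1-j\}}$, as desired. The main obstacle is finding the factorisation $(1-x)^{-u} - A^{(j)}(x) = \bigl((1+x)^j-1\bigr)\,A^{(j)}(x)$, which converts the alternating-sum bound we want into an obvious positivity statement; everything else is routine bookkeeping.
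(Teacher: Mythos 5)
Your proof is correct, and the overall strategy coincides with the paper's: write $S_d = N_{d,\ell}-2c_{d,\ell}(j)$ as the coefficient of $x^d$ in $A^{(j)}(x) = 1/\bigl((1+x)^j(1-x)^{\ell+1-j}\bigr)$, use the $x\mapsto -x$ symmetry to reduce to $j\leq \ell+1-j$, establish $0\leq S_d \leq \binom{d+\ell-j}{\ell-j}$, and then check $N_{d,\ell}\geq 2^j\binom{d+\ell-j}{\ell-j}$ by a telescoping ratio using $d\geq\ell$. The one place where you genuinely diverge is in obtaining the upper bound $S_d\leq\binom{d+\ell-j}{\ell-j}$. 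The paper proves this by factoring $A^{(j)}(x) = 1/\bigl((1-x^2)^j(1-x)^{\ell-2j+1}\bigr)$ and observing that the coefficients of $1/(1-x^2)$ are termwise dominated by those of $1/(1-x)$, so the whole product is coefficient-wise dominated by $1/(1-x)^{\ell-j+1}$. You instead multiply the identity $(1+x)^jA^{(j)}(x) = (1-x)^{-u}$ into the form $\bigl((1+x)^j-1\bigr)A^{(j)}(x) = (1-x)^{-u} - A^{(j)}(x)$ and read off non-negativity of the right side from the manifest non-negativity of both factors on the left. This is a slightly slicker route to the same inequality: it avoids the need to argue a termwise majorization of one power series by another, at the small price of still needing the $(1-x^2)^j$ factorization to certify $S_d\geq 0$ in the first place. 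Both are short and correct; yours has the small aesthetic advantage that the final bound drops out in one line from a single algebraic identity.
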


\begin{proof}
Recalling \eqref{Equality number monomials}
and \eqref{Definition c}, 
we begin by noting that
\begin{equation}
\label{Equality Cauchy}
N_{d,\ell} - 2 c_{d,\ell}(j) = \sum_{k=0}^d (-1)^k \binom{k+j-1}{k} \binom{d-k+\ell-j}{d-k}.
\end{equation}
Therefore, we see that
\begin{equation*}
N_{d,\ell} - 2 c_{d,\ell}(j) = (-1)^d \left( N_{d,\ell} - 2 c_{d,\ell}(\ell+1-j) \right).
\end{equation*}
Hence, we can assume from now on that $j \leq  (\ell+1)/2$ and we aim to prove that 
\begin{equation}
\label{Upper bound goal}
\left| 1-\frac{2c_{d,\ell}(j)}{N_{d,\ell}} \right| \leq 
 \frac{1}{2^j}.
\end{equation}

It follows from  \eqref{Equality Cauchy} that for any $z \in D(0,1)$ we have
\begin{equation*}
\sum_{d=0}^{\infty} \left( N_{d,\ell} - 2 c_{d,\ell}(j) \right) z^d = \left( \sum_{d=0}^{\infty} (-1)^d \binom{d+j-1}{d} z^d \right) \cdot \left( \sum_{d=0}^{\infty} \binom{d+\ell-j}{d} z^d \right).
\end{equation*}
As a result, Lemma~\ref{Lemma series} gives
\begin{equation*}
\sum_{d=0}^{\infty} \left( N_{d,\ell} - 2 c_{d,\ell}(j) \right) z^d = \frac1{(1+z)^j (1-z)^{\ell-j+1}}
= \frac1{(1-z^2)^j (1-z)^{\ell-2j+1}}.
\end{equation*}
The coefficients of the power series $1/(1-z^2)$ and $1/(1-z)$ are all non-negative, which implies that 
$
N_{d,\ell} - 2 c_{d,\ell}(j) \geq 0,
$ 
for all $d\geq 0$. Note that the coefficients of $1/(1-z^2)$ are all bounded by the coefficients of $1/(1-z)$.
Suppose we have power series
$$
A(z)=\sum_{d=0}^\infty A_d z^d, \quad 
B(z)=\sum_{d=0}^\infty B_d z^d, \quad
C(z)=\sum_{d=0}^\infty C_d z^d, \quad 
D(z)=\sum_{d=0}^\infty D_d z^d,
$$
where $A_d,B_d,C_d,D_d\geq 0$ for all $d\geq 0$.  If $A_d\leq C_d$ and $B_d\leq D_d$ for all $d\geq 0$ then clearly the coefficients of $A(z)B(z)$ are all bounded by the coefficients of $C(z)D(z)$. In this way we deduce that the coefficients of 
$$
 \frac1{(1-z^2)^j (1-z)^{\ell-2j+1}}
$$
are all bounded by the coefficients of 
$$
 \frac1{(1-z)^j (1-z)^{\ell-2j+1}}=
  \frac1{(1-z)^{\ell-j+1}}.
$$
On appealing to Lemma \ref{Lemma series}, we conclude  that 
$$
0\leq N_{d,\ell} - 2 c_{d,\ell}(j)\leq 
\binom{d+\ell-j}{d},
$$
for all $d\geq 0$.
Finally, since $j\leq \ell\leq d$, we note that 
$$
\frac{1}{N_{d,\ell}}\binom{d+\ell-j}{d}=\frac{\ell}{d+\ell}\cdot 
\frac{\ell-1}{d+\ell-1}\cdots 
\frac{\ell-(j-1)}{d+\ell-(j-1)}\leq \frac{1}{2^{j}},
$$
as claimed in 
\eqref{Upper bound goal}.
\end{proof}

We close with the following inequality that exploits a connection to the hypergeometric distribution in probability. 

\begin{lemma}\label{lem:prob}
Let $r,s\geq 0$ be integers. Then 
$$
\sum_{j=0}^r\binom{r}{j}\binom{s}{j} 2^{-j}\leq \left(\frac{3}{4}\right)^\mu \binom{r+s}{r},
$$
where $\mu=rs/(r+s).$
\end{lemma}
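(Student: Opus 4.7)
The plan is to reinterpret the inequality probabilistically and then bound the resulting moment of a hypergeometric random variable by comparison with the corresponding binomial. First I would observe that $\binom{r}{j}\binom{s}{j}/\binom{r+s}{r}$ is precisely the probability $\mathbb{P}(J=j)$, where $J$ is the hypergeometric variable counting the number of marked items in a uniformly random $r$-element subset of an urn containing $r+s$ items of which $s$ are marked. In particular, $\mathbb{E}[J] = rs/(r+s) = \mu$, and, after dividing through by $\binom{r+s}{r}$, the desired inequality becomes
$$
\mathbb{E}\bigl[2^{-J}\bigr] \leq \left(\frac{3}{4}\right)^{\mu}.
$$

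The central step will be to compare $J$ with the binomial $B = Y_1 + \dots + Y_r$, a sum of $r$ i.i.d.\ Bernoulli$(p)$ variables with $p = s/(r+s)$, which has the same mean $\mu = rp$ as $J$. By Hoeffding's convex comparison theorem, $\mathbb{E}[\phi(J)] \leq \mathbb{E}[\phi(B)]$ holds for every convex function $\phi$ (morally, sampling without replacement gives a more concentrated distribution than sampling with replacement). Applying this to the convex function $\phi(x) = 2^{-x}$ yields
$$
\mathbb{E}\bigl[2^{-J}\bigr] \leq \mathbb{E}\bigl[2^{-B}\bigr] = \left(1 - \frac{p}{2}\right)^r.
$$

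The final step will be to verify the elementary one-variable inequality $(1 - p/2)^r \leq (3/4)^{rp}$ for $p \in [0,1]$. Taking logarithms and dividing by $r$, this reduces to $\log(1 - p/2) \leq p\log(3/4)$ on $[0,1]$. Both sides vanish at $p = 0$, and the derivative of the difference (right minus left) is $\log(3/4) + 1/(2-p)$, which is non-negative on $[0,1]$ since $1/(2-p) \geq 1/2 > \log(4/3) = -\log(3/4)$.

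I expect the main obstacle to be the invocation of Hoeffding's convex comparison theorem, which, while standard, is the non-trivial ingredient pinning down the right constant. A fully self-contained alternative would rely on the generating-function identity $\sum_{j} \binom{r}{j}\binom{s}{j} 2^{-j} = [z^r](1+z)^r(1+z/2)^s$ combined with the Cauchy coefficient bound $[z^r]f(z) \leq f(z_0)/z_0^r$ for $z_0 > 0$; optimizing $z_0$ at the saddle point would give the same estimate, though at the cost of a more intricate calculus.
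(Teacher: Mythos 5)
Your proposal is correct and follows essentially the same route as the paper: both reinterpret the normalized sum as the expectation of a convex function of a hypergeometric random variable and invoke Hoeffding's Theorem 4 comparing sampling without replacement to sampling with replacement, yielding $\mathbb{E}[2^{-J}] \leq (1-p/2)^r$ with $p = s/(r+s)$. The only cosmetic difference is in the final elementary step: the paper passes through $(1-p/2)^r \leq e^{-rp/2}$ and then uses $e^{-1/2} \leq 3/4$, whereas you verify $(1-p/2)^r \leq (3/4)^{rp}$ directly by a derivative comparison, which is equivalent.
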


\begin{proof}
There is a convenient combinatorial model for this situation. 
Suppose we have a population of size $r+s$, comprising $r$ copies of $0$ and $s$ copies of $-\log 2$. Suppose we pick $r$ elements without replacement. Then the  probability of drawing the value $-\log 2$ exactly $j$ times is given by the hypergeometric distribution 
$$
q_j=
\frac{\binom{r}{j}\binom{s}{j}}{\binom{r+s}{r}}.
$$
Note that the sum of the $r$ samples is equal to $-j\log2=\log(2^{-j})$. Thus the sum
$$
S=\frac{1}{\binom{r+s}{r}}\sum_{j=0}^r\binom{r}{j}\binom{s}{j} 2^{-j}
$$
is equal to the expectation of the exponential of the sum of $r$ samples without replacement. 
It follows from a result of 
Hoeffding \cite[Thm.~4]{hoeffding} that this expectation is bounded 
by the expectation of the exponential of the sum of $r$ samples with replacement, which is equal to the $r$th power of the expectation of one sample. But one sample is either $0$ (with probability $\frac{r}{r+s}$) or
$-\log 2$ (with probability $\frac{s}{r+s}$). It therefore follows that 
$$
S\leq \left(\frac{r}{r+s}\cdot \exp(0) +\frac{s}{r+s}\cdot \exp(-\log 2)\right)^r=\left(1-\frac{s}{2(r+s)}\right)^r\leq
\exp\left(-\frac{rs}{2(r+s)}\right).
$$
The statement of the lemma follows on noting that 
$e^{-1/2}\leq 3/4$.
\end{proof}

\section{The first and second moments}\label{sec:moments}

For $n \geq 2$ and $\ell \in \{1, \dots, n\}$, define  $U_{n,\ell}\subset \mathbb{P}^n(\mathbb{Q})$ 
to be the 
 subset
whose elements have a representative with exactly $\ell+1$ coordinates equal to $\pm 1$ and all remaining coordinates equal to $0$. We clearly have
\begin{equation}
\label{Cardinality Unl}
\# U_{n,\ell} = 2^{\ell} \binom{n+1}{\ell+1}.
\end{equation}
For $k \geq 0$ and any $\ell \in \{1, \dots, n\}$, 
we introduce the moments
\begin{equation}\label{eq:moment}
M_k(d,n;\ell) = \frac1{\# \mathbb{B}_{d,n}} \sum_{V \in \mathbb{B}_{d,n}} \left( \sum_{x \in U_{n,\ell} \cap V(\mathbb{Q})} 1 \right)^k.
\end{equation}
The following  notation will prove useful.

\begin{definition}\label{def-f}
Let $n\geq 1$. Given $f\in \ZZ[x_0,\dots,x_n]$, we let $\nu(f)$ denote the number of coefficients of $f$ equal to $-1$.
\end{definition}

We start by establishing an explicit formula for the first moment.

\begin{lemma}
\label{Lemma first moment}
Let $d, n \geq 2$ and let $\ell \in \{1, \dots, n\}$ be such that $N_{d,\ell}$ is even. We have
\begin{equation*}
M_1(d,n;\ell) = \frac1{2^{N_{d,\ell}-\ell}}  \binom{n+1}{\ell+1}\binom{N_{d,\ell}}{N_{d,\ell}/2}.
\end{equation*}
\end{lemma}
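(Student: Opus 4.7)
The plan is to swap the order of summation in \eqref{eq:moment} so as to reduce the computation to counting, for each fixed point $x\in U_{n,\ell}$, the number of hypersurfaces $V\in\mathbb{B}_{d,n}$ passing through $x$. Explicitly,
$$
M_1(d,n;\ell) = \frac1{\#\mathbb{B}_{d,n}} \sum_{x\in U_{n,\ell}} \#\{V\in\mathbb{B}_{d,n} : x\in V(\mathbb{Q})\}.
$$
By symmetry (or by the explicit orbit of the signed permutation action on $U_{n,\ell}$), the inner count depends only on $\ell$ and not on the particular $x$, so the problem collapses to fixing one such $x$ and counting hypersurfaces through it.

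Fix $x\in U_{n,\ell}$ with coordinates in $\{-1,0,+1\}$, exactly $\ell+1$ of them nonzero. Given any $f=\sum c_{i_0,\dots,i_n} x_0^{i_0}\cdots x_n^{i_n}$ with $c_{i_0,\dots,i_n}\in\{-1,+1\}$, the monomials contributing to $f(x)$ are precisely those supported on the coordinates where $x$ is nonzero; there are exactly $N_{d,\ell}$ such monomials. For each contributing monomial, the quantity $c_{i_0,\dots,i_n}\cdot x_0^{i_0}\cdots x_n^{i_n}$ is a product of a uniformly random sign (the coefficient) with a deterministic sign (coming from $x$); hence, as $f$ ranges over all $2^{N_{d,n}}$ sign patterns, the $N_{d,\ell}$ contributing summands behave as independent uniform $\pm 1$ variables, while the remaining $N_{d,n}-N_{d,\ell}$ coefficients are free. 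The condition $f(x)=0$ becomes the requirement that a sum of $N_{d,\ell}$ independent signs vanishes. Since $N_{d,\ell}$ is even, the number of such sign sequences is $\binom{N_{d,\ell}}{N_{d,\ell}/2}$.

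Putting this together, the number of \emph{forms} $f$ with coefficients in $\{-1,+1\}$ such that $f(x)=0$ is
$$
2^{N_{d,n}-N_{d,\ell}}\binom{N_{d,\ell}}{N_{d,\ell}/2}.
$$
The pairing $f\leftrightarrow -f$ preserves vanishing at $x$, so dividing by $2$ yields the number of hypersurfaces, namely $2^{N_{d,n}-N_{d,\ell}-1}\binom{N_{d,\ell}}{N_{d,\ell}/2}$. Inserting this together with \eqref{Cardinality Bdn} and \eqref{Cardinality Unl} gives
$$
M_1(d,n;\ell) = \frac{2^\ell \binom{n+1}{\ell+1}\cdot 2^{N_{d,n}-N_{d,\ell}-1}\binom{N_{d,\ell}}{N_{d,\ell}/2}}{2^{N_{d,n}-1}} = \frac{1}{2^{N_{d,\ell}-\ell}}\binom{n+1}{\ell+1}\binom{N_{d,\ell}}{N_{d,\ell}/2},
$$
as claimed. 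There is no real obstacle here; the only subtle point is the clean observation that as $c$ ranges uniformly over $\{-1,+1\}$, so does $c\cdot s$ for any fixed $s\in\{-1,+1\}$, which is what makes the sign twists coming from $x$ disappear from the combinatorial count.
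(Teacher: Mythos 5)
Your proof is correct and follows essentially the same route as the paper: swap the order of summation, reduce to forms supported on the $N_{d,\ell}$ monomials in the nonzero coordinates of $x$, observe that twisting by the fixed signs of $x$ preserves uniformity of $\pm 1$ coefficients (the paper phrases this as an explicit bijection $\iota$ reducing to the point $(1,\dots,1)$), and count sign sequences summing to zero. The only cosmetic difference is that you phrase the sign-twist probabilistically while the paper states it as a bijection followed by the identity $f_V(1,\dots,1)=N_{d,\ell}-2\nu(f_V)$; both yield the same count $2^{N_{d,n}-N_{d,\ell}-1}\binom{N_{d,\ell}}{N_{d,\ell}/2}$ of hypersurfaces through a fixed $x$.
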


\begin{proof}
We first note that
\begin{equation}\label{eq:goat}
M_1(d,n;\ell) = \frac1{\# \mathbb{B}_{d,n}} \sum_{x \in U_{n,\ell}} \ \sum_{\substack{V \in \mathbb{B}_{d,n} \\ x \in V(\mathbb{Q})}} 1.
\end{equation}
Next, we remark that for any $x \in U_{n,\ell}$ we have 
$$
\sum_{\substack{V \in \mathbb{B}_{d,n} \\ x \in V(\mathbb{Q})}} 1=
2^{N_{d,n}-N_{d,\ell}}\sum_{\substack{V \in \mathbb{B}_{d,\ell} \\ \tilde x \in V(\mathbb{Q})}} 1,
$$
where $\tilde x\in \mathbb{P}^\ell(\QQ)$ is obtained from $x$ by removing its $n-\ell$ zero coordinates. 
Moreover, there exists an obvious bijection $\iota: \mathbb{B}_{d,\ell}\to \mathbb{B}_{d,\ell}$ such that 
$$
\iota \left(\left\{
V \in \mathbb{B}_{d,\ell}: \tilde x \in V(\mathbb{Q})\right\}\right)=
\left\{
V \in \mathbb{B}_{d,\ell}: f_V(1,\dots,1)=0\right\},
$$
where $f_V$ is the degree $d$ form defining $V$.
But $f_V(1,\dots,1)=N_{d,\ell}-2\nu(f_V)$ for any 
$V \in \mathbb{B}_{d,\ell}$, where $\nu$ is given in Definition \ref{def-f}. Hence it follows that 
\begin{equation}\label{eq:3.2}
\begin{split}
\sum_{\substack{V \in \mathbb{B}_{d,n} \\ x \in V(\mathbb{Q})}} 1
&=
2^{N_{d,n}-N_{d,\ell}}
\#\left\{
V \in \mathbb{B}_{d,\ell}: \nu(f_V)=N_{d,\ell}/2\right\}\\
&=2^{N_{d,n}-N_{d,\ell}-1} \binom{N_{d,\ell}}{N_{d,\ell}/2},
\end{split}
\end{equation}
on recalling our assumption that  $N_{d,\ell}$ is even.
The statement of the lemma now follows on inserting this into \eqref{eq:goat} and recalling 
the equalities \eqref{Cardinality Bdn} and \eqref{Cardinality Unl}.
\end{proof}

The  second moment bound is much more complicated. We shall need  the following piece of notation. 
\begin{definition}\label{def-f'}
Let $n\geq 1$
and let $j\in \{1,\dots,n\}.$
Given $f\in \ZZ[x_0,\dots,x_n]$, we define  $\even_j(f)$ to be the part of $f$ that is 
even with  respect to the first $j$ variables. Similarly, we define 
$\odd_j(f)$ to be the part of $f$ that is  odd  with respect to the first $j$ variables. 
Next, let 
$a\in \{1,\dots,n\}$ and  $b\in \{0,\dots,n+1-a\}$.
We also set $R_a^b(f)$ to be the sum of the monomials of $f$ involving only the first $a$ and the final $b$ variables.
\end{definition}

We are now ready to establish the following  upper bound for the second moment.

\begin{lemma}
\label{Lemma second moment}
Let $d, n \geq 2$ and let $\ell \in \{1, \dots, \min\{d,n\}\}$ be such that $N_{d,\ell}$ is even. Then 
\begin{equation*}
\frac{M_2(d,n;\ell)}{M_1(d,n;\ell)^2} \leq 
1 + \boldsymbol{1}_{\ell=n} + O \left( \left(\frac{3}{4}\right)^{\min\{\ell,\mu_\ell\}}+\frac1{\min\{d,n\}}+
 \frac1{M_1(d,n;\ell)}\right) ,
\end{equation*}
where
\begin{equation}\label{eq:soil}
\mu_\ell=\frac{(n-\ell)(\ell+1)}{n+1}.
\end{equation}
\end{lemma}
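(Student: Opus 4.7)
The plan is to expand
$$M_2(d,n;\ell) = \sum_{x,y \in U_{n,\ell}} \Pr_V\bigl( f_V(x) = f_V(y) = 0 \bigr),$$
where $V \in \mathbb{B}_{d,n}$ is uniform and $f_V$ denotes its defining form, and then classify ordered pairs $(x,y)$ by the invariants $j := |S_x \cap S_y|$ and $j_-$, the count of positions in $S_x \cap S_y$ where chosen representatives of $x$ and $y$ have opposite signs (well-defined modulo $j_- \leftrightarrow j - j_-$). A direct count yields $\binom{n+1}{\ell+1}\binom{\ell+1}{j}\binom{n-\ell}{\ell+1-j}\cdot 2^{2\ell-j}\binom{j}{j_-}$ ordered pairs with given $(j, j_-)$. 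For such a pair, I split the degree-$d$ monomials by their support relative to $S_x$ and $S_y$, writing $f_V(x) = \alpha + \gamma_x$ and $f_V(y) = \beta + \gamma_y$, in which $\alpha, \beta$ are independent simple $\pm 1$-walks of length $r := N_{d,\ell} - N_{d,j-1}$; the joint law of $(\gamma_x, \gamma_y)$ decouples after setting $W_\pm := (\gamma_x \pm \gamma_y)/2$, which become independent walks of lengths $m_+ := N_{d,j-1} - c_{d,j-1}(j_-)$ and $m_- := c_{d,j-1}(j_-)$, in the notation of \eqref{Definition c}.

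Combining these ingredients through the Fourier identity $\boldsymbol{1}_{m=0} = \int_0^1 e^{2\pi i m\theta}\,d\theta$ yields
$$\Pr_V\bigl( f_V(x) = f_V(y) = 0 \bigr) = \int_0^1\!\!\int_0^1 \cos^r(2\pi\theta)\cos^r(2\pi\phi)\cos^{m_+}(2\pi(\theta+\phi))\cos^{m_-}(2\pi(\theta-\phi))\,d\theta\,d\phi.$$
A product-to-sum expansion converts this into a finite sum over $k \in \{0,\dots,r\}$ of products of three central binomial coefficients. Stirling's estimate \eqref{Estimate Stirling binom}, combined with recognizing the $k$-sum as a binomial expectation (essentially a local 2D central limit theorem for $(f_V(x), f_V(y))$, whose covariance matrix has determinant $N_{d,\ell}^2(1-\delta^2)$), yields, for $j \leq \ell$,
$$\frac{\Pr_V(f_V(x) = f_V(y) = 0)}{p_1^2} \leq \frac{1}{\sqrt{1-\delta^2}}\bigl(1 + O(1/\min\{d,n\})\bigr), \qquad \delta := \frac{m_+ - m_-}{N_{d,\ell}},$$
where $p_1 := M_1(d,n;\ell)/\#U_{n,\ell} = 2^{-N_{d,\ell}}\binom{N_{d,\ell}}{N_{d,\ell}/2}$ by Lemma \ref{Lemma first moment}. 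In the degenerate case $j = \ell+1$ (so $r = 0$) a direct evaluation gives the analogous bound with an additional factor of $2$ on the right-hand side. In either regime Lemma \ref{Lemma coefficients} controls $|\delta| \leq (N_{d,j-1}/N_{d,\ell})\cdot 2^{-\min(j_-, j-j_-)}$.

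I then split $M_2/M_1^2$ into three pieces: the diagonal $x = y$ (contributing exactly $1/M_1$); off-diagonal pairs with $j \leq \ell$; and off-diagonal pairs with $j = \ell+1$. For the second piece, using $1/\sqrt{1-\delta^2} \leq 1 + O(\delta^2)$ together with $\delta^2 \leq 4^{-\min(j_-, j-j_-)}$, summing over $j_-$ weighted by $\binom{j}{j_-}/2^j$ produces a factor $O((3/4)^j)$; averaging this against the hypergeometric distribution on $i := \ell+1-j$ with probability mass $\binom{\ell+1}{i}\binom{n-\ell}{i}/\binom{n+1}{\ell+1}$ and invoking Lemma \ref{lem:prob} with parameters $(r,s) = (\ell+1, n-\ell)$ (so that $\mu_\ell = rs/(r+s)$) gives total excess $O((3/4)^{\mu_\ell})$. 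For the third piece, when $\ell = n$ this subclass comprises all non-diagonal pairs, and summing the factor-$2$ bound yields the $\boldsymbol{1}_{\ell=n}$ term plus an $O((3/4)^\ell)$ error coming from the $\delta^2$ sum; when $\ell < n$ these pairs carry total weight $1/\binom{n+1}{\ell+1}$, which binomial-entropy estimates show is itself $O((3/4)^{\min(\ell, \mu_\ell)})$.

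The principal obstacle will be the uniform control of the local CLT step in the second paragraph across all $(j, j_-)$, especially in the degenerate regime where $\min(m_+, m_-)$ is small and Stirling's formula loses its accuracy; such configurations must be handled either by an exact combinatorial identity (for instance, the extreme cases $j_- \in \{0, j\}$, for which $\gamma_x = \pm \gamma_y$ and the probability admits a direct evaluation) or by the observation that they carry negligible weight within the double average.
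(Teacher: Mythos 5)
Your route is genuinely different: where the paper evaluates the joint vanishing probability exactly by splitting coefficients and applying Vandermonde convolutions, then bounds the resulting binomial expressions pointwise, you compute the same probability through its characteristic function and a two-dimensional local limit theorem. The surrounding architecture (strata indexed by overlap $j$ and sign-mismatch $j_-$, diagonal isolated, hypergeometric averaging via Lemma \ref{lem:prob}) mirrors the paper's decomposition $M_2 = M_1 + \Sigma^{(1)} + \Sigma^{(2)} + \Sigma^{(3)}$.

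There is, however, a genuine gap in the third paragraph beyond the local-CLT uniformity you already flag. Feeding $\delta^2 \leq 4^{-\min(j_-, j-j_-)}$ into the $j_-$-average gives a factor of order $(5/8)^j = (5/8)^{\ell+1-i}$, which \emph{increases} in $i$, whereas Lemma \ref{lem:prob} needs a weight $2^{-i}$ \emph{decreasing} in $i$. When $n \gg \ell$ the hypergeometric measure concentrates near $i \approx \mu_\ell \approx \ell+1$, where $(5/8)^{\ell+1-i}$ is bounded below, so the argument as stated does not deliver $O((3/4)^{\mu_\ell})$ (the typical stratum $j=0$ contributes zero excess exactly, but your bound charges it $O(1)$). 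The fix is already in your second paragraph: retain the overlap ratio in $|\delta| \leq (N_{d,j-1}/N_{d,\ell})\cdot 2^{-\min(j_-, j-j_-)}$. Since $\ell \leq d$ one has $N_{d,j-1}/N_{d,\ell} \leq 2^{-(\ell+1-j)} = 2^{-i}$, and then the $j_-$-averaged $\delta^2$ is $\leq 4^{-i}(5/8)^{\ell+1-i} \leq 2^{-i}$, which is precisely the weight Lemma \ref{lem:prob} handles with $(r,s)=(\ell+1,n-\ell)$. This overlap factor is exactly what the paper's $\Sigma^{(2)}$ analysis uses (their $\Delta_i \leq 1 + O(2^{-(\ell-i)})$); the sign-mismatch decay $2^{-\min(j_-, j-j_-)}$ is deployed there only in $\Sigma^{(1)}$, where the overlap is total and the first factor degenerates. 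Two smaller remarks: (i) the paper avoids the delicate uniformity of the local CLT across small $m_{\pm}$ by replacing, inside the exact Vandermonde convolution, the off-center binomial $\binom{N_{d,\ell}-N_{d,i}}{N_{d,\ell}/2-\alpha}$ by the central one, which costs only the factor $(1 - N_{d,i}/N_{d,\ell})^{-1/2}$ and requires no asymptotics in the degenerate ranges; (ii) your assertion $1/\binom{n+1}{\ell+1} = O((3/4)^{\min(\ell,\mu_\ell)})$ when $\ell<n$ is unnecessary and not obviously true -- the paper simply uses $\binom{n+1}{\ell+1}\geq n$ for $\ell<n$ and absorbs this into the $O(1/\min\{d,n\})$ term.
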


\begin{proof}
We start by noting that
\begin{equation*}
M_2(d,n;\ell) = \frac1{\# \mathbb{B}_{d,n}} \sum_{x, y \in U_{n,\ell}} \sum_{\substack{V \in \mathbb{B}_{d,n} \\ x, y \in V(\mathbb{Q})}} 1.
\end{equation*}
We set $m_{n,\ell} = \max\{2\ell-n,0\}$ and for $i \in \{m_{n,\ell}, \dots, \ell\}$ and $j \in \{0, \dots, i\}$, we let $T_{n,\ell}^{(i,j)}$ denote the set of pairs of elements $x, y \in U_{n,\ell}$ which have exactly $i+1$ nonzero coordinates in common, and are such that among these $i+1$ coordinates exactly $j$ are different when the final nonzero coordinate shared by $x$ and $y$ is viewed as being $1$. Moreover, we let $T_{n,\ell}$ denote the set of pairs of elements of $ U_{n,\ell}$ which do not have any nonzero coordinates in common. We notice that
\begin{equation*}
U_{n,\ell} \times U_{n,\ell} = \left( \bigsqcup_{i=m_{n,\ell}}^{\ell} \ \bigsqcup_{j=0}^i T_{n,\ell}^{(i,j)} \right) \sqcup T_{n,\ell},
\end{equation*}
which allows us to write
\begin{equation}
\label{Equality M2}
M_2(d,n;\ell) = M_1(d,n;\ell) + \Sigma^{(1)}(d,n;\ell)+\Sigma^{(2)}(d,n;\ell)+\Sigma^{(3)}(d,n;\ell),
\end{equation}
where
\begin{equation*}
\Sigma^{(1)}(d,n;\ell) = \frac1{\# \mathbb{B}_{d,n}} \sum_{j=1}^{\ell} \ \sum_{(x, y) \in T_{n,\ell}^{(\ell,j)}} \sum_{\substack{V \in \mathbb{B}_{d,n} \\ x, y \in V(\mathbb{Q})}} 1,
\end{equation*}
and
\begin{equation*}
\Sigma^{(2)}(d,n;\ell) = \frac1{\# \mathbb{B}_{d,n}} \sum_{i=m_{n,\ell}}^{\ell-1} \ \sum_{j=0}^i \ \sum_{(x, y) \in T_{n,\ell}^{(i,j)}} \sum_{\substack{V \in \mathbb{B}_{d,n} \\ x, y \in V(\mathbb{Q})}} 1,
\end{equation*}
and
\begin{equation*}
\Sigma^{(3)}(d,n;\ell) = \frac1{\# \mathbb{B}_{d,n}} \sum_{(x, y) \in T_{n,\ell}} \sum_{\substack{V \in \mathbb{B}_{d,n} \\ x, y \in V(\mathbb{Q})}} 1.
\end{equation*}
We note  that $\Sigma^{(2)}(d,n;\ell)=0$ if $\ell=n$ and $\Sigma^{(3)}(d,n;\ell)=0$ if $\ell \geq n/2$.
The most difficult sum to estimate is  
$\Sigma^{(2)}(d,n;\ell)$. While it ought to be possible to handle the three sums simultaneously, 
by allowing $i$ to also run over $-1$ and $\ell$ in the definition of $\Sigma^{(2)}(d,n;\ell)$, the treatment is  made easier through the restriction $i\leq \ell-1$, since we can then exploit a non-trivial upper bound for $N_{d,i}/N_{d,\ell}.$

Let $i\in \{m_{n,\ell},\dots,\ell\}$ and let $j\in \{0,\dots,i\}$. If $(x,y)\in T_{n,\ell}^{(i,j)}$ then 
$$
\sum_{\substack{V \in \mathbb{B}_{d,n} \\ x, y \in V(\mathbb{Q})}} 1=
2^{N_{d,n}-N_{d,2\ell-i}} 
\sum_{\substack{V \in \mathbb{B}_{d,2\ell-i} \\ \tilde x, \tilde y \in V(\mathbb{Q})}} 1,
$$
where $\tilde x, \tilde y\in \mathbb{P}^{2\ell-i}(\QQ)$ are obtained from $x$ and $y$ by removing their $n-2\ell+i$ common zeros. There exists an obvious bijection 
$\iota:  \mathbb{B}_{d,2\ell-i}\to \mathbb{B}_{d,2\ell-i}$ such that 
$$
\iota\left(
\left\{ 
V \in \mathbb{B}_{d,2\ell-i}: \tilde x, \tilde y \in V(\mathbb{Q})\right\}\right)=
\left\{ 
V \in \mathbb{B}_{d,2\ell-i} : f_V(\xx_{i,j})=f_V(\yy_{i,j})=0\right\},
$$
where
$$
\xx_{(i,j)} = (\underbrace{\strut 1 , \ldots , 1}_{j} , \underbrace{\strut 1 , \dots , 1}_{i+1-j} , 
\underbrace{\strut 1 , \dots , 1}_{\ell-i} , \underbrace{\strut 0 , \dots , 0}_{\ell-i})
$$
and 
$$
\yy_{(i,j)} = (\underbrace{\strut -1 , \dots , -1}_{j} , \underbrace{\strut 1 , \dots , 1}_{i+1-j} , 
\underbrace{\strut 0 , \dots , 0}_{\ell-i} , \underbrace{\strut 1 , \dots , 1}_{\ell-i}).
$$
Hence for 
 $i\in \{m_{n,\ell},\dots,\ell\}$ and $j\in \{0,\dots,i\}$, we have 
\begin{equation}\label{eq:**}
\sum_{\substack{V \in \mathbb{B}_{d,n} \\ x, y \in V(\mathbb{Q})}} 1=
2^{N_{d,n}-N_{d,2\ell-i}} 
\#\left\{V \in \mathbb{B}_{d,2\ell-i} :
f_V(\xx_{i,j})=f_V(\yy_{i,j})=0\right\},
\end{equation}
for  any $(x,y)\in T_{n,\ell}^{(i,j)}$.

Next, we note that for any $x \in U_{n,\ell}$ and $i \in \{m_{n,\ell}, \dots, \ell\}$, $j \in \{0, \dots, i\}$, we have 
\begin{equation*}
\# \left \{ y \in U_{n,\ell} : (x, y) \in T_{n,\ell}^{(i,j)} \right\} = 2^{\ell-i} \binom{n-\ell}{\ell-i} \binom{\ell+1}{i+1} \binom{i}{j},
\end{equation*}
and
\begin{equation*}
\# \left \{ y \in U_{n,\ell} : (x, y) \in T_{n,\ell} \right\} = 2^{\ell} \binom{n-\ell}{\ell+1}.
\end{equation*}
Recalling the equality \eqref{Cardinality Unl}, we thus deduce that
\begin{equation}
\label{Cardinality Tnlij}
\# T_{n,\ell}^{(i,j)} = 2^{2\ell-i} \binom{n+1}{\ell+1} \binom{n-\ell}{\ell-i} \binom{\ell+1}{i+1} \binom{i}{j},
\end{equation}
and
\begin{equation}
\label{Cardinality Tnl}
\# T_{n,\ell} = 2^{2 \ell} \binom{n+1}{\ell+1} \binom{n-\ell}{\ell+1}.
\end{equation}

\subsubsection*{Analysis of $\Sigma^{(1)}(d,n;\ell)$}

This concerns the case $i=\ell$, so that $2\ell-i=\ell$. 
Given $V\in 
 \mathbb{B}_{d,\ell}$ and $j\in \{1,\dots,\ell\}$, we note that
\begin{align*}
f_V(\xx_{i,j})
&=N_{d,\ell}-2\nu(f_V)\\
&=N_{d,\ell}-2\nu(\even_j(f_V))-2\nu(\odd_j(f_V)),
\end{align*}
in the notation of Definition \ref{def-f'}.
Recall that 
 $c_{d,\ell}(j)$ is defined to be the number of monomials of degree $d$ which are odd with respect to $j$ given variables among $\ell+1$, and that it satisfies  \eqref{Definition c}.
Then we find similarly that
 \begin{align*}
f_V(\yy_{i,j})
&=N_{d,\ell}-
c_{d,\ell}(j)-
2\nu(\even_j(f_V))-\left(c_{d,\ell}(j)-2\nu(\odd_j(f_V))\right)\\
&=N_{d,\ell}-
2c_{d,\ell}(j)-
2\nu(\even_j(f_V))+2\nu(\odd_j(f_V)).
\end{align*}
Hence it follows from \eqref{eq:**} that 
$$
\sum_{\substack{V \in \mathbb{B}_{d,n} \\ x, y \in V(\mathbb{Q})}} 1=
2^{N_{d,n}-N_{d,\ell}} 
\#\left\{V \in \mathbb{B}_{d,\ell} : 
\begin{array}{l}
\nu(\even_j(f_V))=(N_{d,\ell}-c_{d,\ell}(j))/2,\\
\nu(\odd_j(f_V))=c_{d,\ell}(j)/2
\end{array}\right\}.
$$
Since the integer $N_{d,\ell}$ is assumed to be even, we deduce that 
\begin{equation*}
\sum_{\substack{V \in \mathbb{B}_{d,n} \\ x, y \in V(\mathbb{Q})}} 1 =
\begin{cases}
0 & \textrm{if } 2 \nmid c_{d,\ell}(j), \\
\displaystyle{2^{N_{d,n}-N_{d,\ell}-1} \binom{N_{d,\ell}-c_{d,\ell}(j)}{(N_{d,\ell}-c_{d,\ell}(j))/2} \binom{c_{d,\ell}(j)}{c_{d,\ell}(j)/2}} & \textrm{if } 2 \mid c_{d,\ell}(j),
\end{cases}
\end{equation*}
for any  $j \in \{1, \dots, \ell\}$ and any $(x, y) \in T_{n,\ell}^{(\ell,j)}$.
Noting that $0<c_{d,j}(\ell)<N_{d,\ell}$, we may
apply the upper bound \eqref{Upper bound Stirling binom}
 twice to  derive the inequality
\begin{equation*}
\sum_{\substack{V \in \mathbb{B}_{d,n} \\ x, y \in V(\mathbb{Q})}} 1 \leq \frac{2^{N_{d,n}+1}}{\pi N_{d,\ell} } \left( 1-\left|
 1-\frac{2c_{d,\ell}(j)}{N_{d,\ell}}\right|^2\right)^{-1/2}.
\end{equation*}
Since  $\ell \leq d$ by assumption, we 
 see that we are in position to apply Lemma~\ref{Lemma coefficients}. 
Thus 
 $
 \left|
 1-\frac{2c_{d,\ell}(j)}{N_{d,\ell}}\right|\leq 0.5,
 $
and we may deduce that
 \begin{equation*}
\sum_{\substack{V \in \mathbb{B}_{d,n} \\ x, y \in V(\mathbb{Q})}} 1 \leq \frac{2^{N_{d,n}+1}}{\pi N_{d,\ell} } \left( 1+O\left(\left|
 1-\frac{2c_{d,\ell}(j)}{N_{d,\ell}}\right|^2\right)\right).
\end{equation*}
 But then a further  
application of   Lemma~\ref{Lemma coefficients} gives
\begin{equation*}
\sum_{\substack{V \in \mathbb{B}_{d,n} \\ x, y \in V(\mathbb{Q})}} 1 \leq \frac{2^{N_{d,n}+1}}{\pi N_{d,\ell}} \left( 1 + O \left( \left(\frac{1}{4} \right)^{\min \{ j, \ell-j \}} \right) \right).
\end{equation*}
Appealing to the equalities \eqref{Cardinality Bdn} and \eqref{Cardinality Tnlij}, we deduce that
\begin{equation*}
\Sigma^{(1)}(d,n;\ell) \leq \frac{2^{\ell+2}}{\pi N_{d,\ell}} \binom{n+1}{\ell+1} \sum_{j=1}^{\ell} \binom{\ell}{j} \left( 1 + O \left( \left(\frac{1}{4} \right)^{\min \{ j, \ell-j \}} \right) \right).
\end{equation*}
But for any $A\geq 1$ we have 
$$
\sum_{a=0}^A \binom{A}{a}=2^A \quad \text{ and } \quad 
\sum_{a=0}^A \binom{A}{a}\left(\frac{1}{4}\right)^a=\left(\frac{5}{4}\right)^A.
$$
Thus 
\begin{equation*}
\sum_{j=1}^{\ell} \binom{\ell}{j} \left( 1 + O \left( \left(\frac{1}{4} \right)^{\min \{ j, \ell-j \}} \right) \right)=
2^\ell \left(1+ O\left(\left(\frac{5}{8}\right)^\ell\right)\right),
\end{equation*}
which yields  in particular that
\begin{align*}
\Sigma^{(1)}(d,n;\ell) & \leq  \frac{2^{2\ell+2}}{\pi N_{d,\ell}} \binom{n+1}{\ell+1} \left( 1 + O\left(
\left(\frac{3}{4}\right)^\ell\right) \right).
\end{align*}
As a result, on appealing to the estimate \eqref{Estimate Stirling binom} and noting that $N_{d,\ell} \geq  d$, we eventually get
\begin{equation}
\label{Upper bound Sigma1}
\begin{split}
\Sigma^{(1)}(d,n;\ell) 
&\leq \frac1{2^{2N_{d,\ell}-2\ell-1}} \binom{N_{d,\ell}}{N_{d,\ell}/2}^2 \binom{n+1}{\ell+1} \left( 1 + O \left( \left(\frac{3}{4}\right)^\ell+\frac{1}{d}\right) \right)\\
&= M_1(d,n;\ell)^2 
\left( \frac{2}{\binom{n+1}{\ell+1}} + O \left( \left(\frac{3}{4}\right)^\ell+\frac{1}{d}\right) \right),
\end{split}
\end{equation}
where the second line follows from Lemma \ref{Lemma first moment}.

\subsubsection*{Analysis of $\Sigma^{(2)}(d,n;\ell)$}

This concerns the case $i\in \{m_{n,\ell},\dots,\ell-1\}$ and $j\in \{0,\dots,i\}$.
Our starting point is the observation that \eqref{eq:**} holds, 
for  any $(x,y)\in T_{n,\ell}^{(i,j)}$.
Then for any 
$V\in 
 \mathbb{B}_{d,2\ell-i}$ we have 
 \begin{align*}
f_V(\xx_{i,j})
&=N_{d,\ell}-2\nu(R_{\ell+1}^0(f_V))
=
N_{d,\ell}-2\nu(R_{i+1}^0(f_V))-2\nu(X_{\ell,i}(f_V)),
\end{align*}
in the notation of Definition \ref{def-f}, where
$X_{\ell,i}(f_V)=R_{\ell+1}^0(f_V)-R_{i+1}^0(f_V)$.
Note that the polynomial $X_{\ell,i}(f_V)$ only has $N_{d,\ell}-N_{d,i}$ coefficients. 
We conclude that $f_V(\xx_{i,j})=0$ if and only if 
\begin{equation}\label{eq:sun}
\nu(X_{\ell,i}(f_V))=\frac{N_{d,\ell}}{2}-
\nu\left( 
\even_j\left(R_{i+1}^0(f_V)\right)\right)
-\nu\left( 
\odd_j\left(R_{i+1}^0(f_V)\right)\right).
\end{equation}
Similarly, we find that 
 $$
f_V(\yy_{i,j})=N_{d,\ell}-c_{d,\ell}(j)-
2\nu\left( 
\even_j\left(R_{i+1}^{\ell-i}(f_V)\right)\right)
-\left(c_{d,\ell}(j)-
2\nu\left( 
\odd_j\left(R_{i+1}^{\ell-i}(f_V)\right)\right)\right),
$$
in the notation of \eqref{Definition c}.
Thus   $f_V(\yy_{i,j})=0$ if and only if 
\begin{equation}\label{eq:moon}
\begin{split}
\nu\left(\even_j\left( Y_{\ell,i}(f_V)\right)\right)
=~&\frac{N_{d,\ell}}{2}-
c_{d,\ell}(j)-
\nu\left( 
\even_j\left(R_{i+1}^0(f_V)\right)\right)\\
&+\nu\left( 
\odd_j\left(R_{i+1}^0(f_V)\right)\right)
+\nu\left( 
\odd_j\left(Y_{\ell,i}(f_V)\right)\right),
\end{split}
\end{equation}
where 
$Y_{\ell,i}(f_V)=R_{i+1}^{\ell-i}(f_V)-R_{i+1}^0(f_V)$.
Note that the polynomial $Y_{\ell,i}(f_V)$ only involves the 
$\ell+i$
variables
$x_{0},\dots,x_{i}, x_{\ell+1},\dots,x_{2\ell-i}$,
with at least one of the variables $x_{\ell+1},\dots,x_{2\ell-i}$ appearing.

It now follows from \eqref{eq:**} that 
$$
\sum_{\substack{V \in \mathbb{B}_{d,n} \\ x, y \in V(\mathbb{Q})}} 1=
2^{N_{d,n}-N_{d,2\ell-i}} 
\#\left\{V \in \mathbb{B}_{d,2\ell-i} : 
\text{ \eqref{eq:sun} and \eqref{eq:moon} hold}
\right\}.
$$
We put 
\begin{align*}
a&=\nu\left( 
\even_j\left(R_{i+1}^0(f_V)\right)\right)\in \left\{0,\dots,N_{d,i}-c_{d,i}(j)\right\},\\
b&=\nu\left( 
\odd_j\left(R_{i+1}^0(f_V)\right)\right)\in \left\{0,\dots,c_{d,i}(j)\right\},\\
c&=\nu\left( 
\odd_j\left(Y_{\ell,i}(f_V)\right)\right)\in \left\{0,\dots,c_{d,\ell}(j)-c_{d,i}(j)\right\}.
\end{align*}
The total number of positions for these $-1$ coefficients is 
$$
\binom{N_{d,i}-c_{d,i}(j)}{a}
\binom{c_{d,i}(j)}{b}
\binom{c_{d,\ell}(j)-c_{d,i}(j)}{c}.
$$
Moreover, it follows from \eqref{eq:sun} that the number of positions for $-1$ in $X_{\ell,i}(f_V)$
$$
\binom{N_{d,\ell}-N_{d,i}}{N_{d,\ell}/2-(a+b)},
$$
on recalling that  the integer $N_{d,\ell}$ is assumed to be even. Similarly, 
it follows from \eqref{eq:moon} that the number of positions for $-1$ in $Y_{\ell,i}(f_V)$ 
is 
$$
\binom{N_{d,\ell}-N_{d,i} -(c_{d,\ell}(j)-c_{d,i}(j))}{N_{d,\ell}/2-c_{d,\ell}(j)-a+b+c}.
$$
For fixed $a,b,c$, among the $N_{d,i}$ monomials in $x_0,\dots,x_i$ only, $a+b$ coefficients are $-1$, and the others are $+1$. Similarly, the signs are already accounted for among the 
 $N_{d,\ell}-N_{d,i}$
monomials in 
$R_{\ell+1}^0(f_V)-R_{i+1}^0(f_V)$ , and in the $N_{d,\ell}-N_{d,i}$ monomials in 
$R_{i+1}^{\ell-i}(f_V)-R_{i+1}^0(f_V)$. Hence the number of free coefficients is 
$$
N_{d,2\ell-i}-N_{d,i}-2(N_{d,\ell}-N_{d,i})=
N_{d,2\ell-i}-2N_{d,\ell}+N_{d,i}.
$$
In conclusion, for any $j \in \{0, \dots, i\}$ and $(x, y) \in T_{n,\ell}^{(i,j)}$, we deduce that 
\begin{align*}
\sum_{\substack{V \in \mathbb{B}_{d,n} \\ x, y \in V(\mathbb{Q})}} 1 & =  2^{N_{d,n}-2N_{d,\ell}+N_{d,i}-1} 
\hspace{-0.2cm}
\sum_{a=0}^{N_{d,i}-c_{d,i}(j)} \ \sum_{b=0}^{c_{d,i}(j)}\
\sum_{c=0}^{c_{d,\ell}(j)-c_{d,i}(j)}
\hspace{-0.2cm}
 \binom{N_{d,i}-c_{d,i}(j)}{a} \binom{c_{d,i}(j)}{b} \\
&\quad\times \binom{c_{d,\ell}(j)-c_{d,i}(j)}{c}
 \binom{N_{d,\ell}-N_{d,i}}{N_{d,\ell}/2-(a+b)} 
\binom{N_{d,\ell}-N_{d,i} -(c_{d,\ell}(j)-c_{d,i}(j))}{N_{d,\ell}/2-c_{d,\ell}(j)-a+b+c} .
\end{align*}
We apply  \eqref{Identity VDM'} to execute the sum over $c$, finding that 
\begin{align*}
\sum_{\substack{V \in \mathbb{B}_{d,n} \\ x, y \in V(\mathbb{Q})}} 1 =~ & \ 2^{N_{d,n}-2N_{d,\ell}+N_{d,i}-1} \sum_{a=0}^{N_{d,i}-c_{d,i}(j)} \ \sum_{b=0}^{c_{d,i}(j)}\
 \binom{N_{d,i}-c_{d,i}(j)}{a} \binom{c_{d,i}(j)}{b} \\
&\times
 \binom{N_{d,\ell}-N_{d,i}}{N_{d,\ell}/2-(a+b)} 
\binom{N_{d,\ell}-N_{d,i}}{N_{d,\ell}/2-\left(N_{d,i}-c_{d,i}(j)-a+b\right)} .
\end{align*}
Note that 
$$
\binom{N_{d,\ell}-N_{d,i}}{N_{d,\ell}/2-\alpha} 
\leq \binom{N_{d,\ell}-N_{d,i}}{
\lfloor (N_{d,\ell}-N_{d,i})/2\rfloor
},
$$
for any integer $\alpha\geq 0$.
On appealing to  the higher Vandermonde identity
\eqref{Identity VDM2}, we may conclude that 
\begin{align*}
\sum_{\substack{V \in \mathbb{B}_{d,n} \\ x, y \in V(\mathbb{Q})}} 1 \leq ~ & \ 2^{N_{d,n}-2N_{d,\ell}+N_{d,i}-1} 
\binom{N_{d,\ell}-N_{d,i}}{
\lfloor (N_{d,\ell}-N_{d,i})/2\rfloor
}
\binom{N_{d,\ell}}{
N_{d,\ell}/2},
\end{align*}
for each $i\in \{m_{n,\ell},\dots, \ell-1\}$ and  $j \in \{0, \dots, i\}$ and $(x, y) \in T_{n,\ell}^{(i,j)}$.

Observe that
$$
 \sum_{j=0}^i \binom{i}{j}=2^i.
$$
We now bring in the expression \eqref{Cardinality Tnlij} for the 
cardinality of $T_{n,\ell}^{(i,j)}$, together with \eqref{Cardinality Bdn}, in order to deduce that 
$$
\Sigma^{(2)}(d,n;\ell) \leq 
\frac{1}{2^{2N_{d,\ell}-2\ell}}\binom{N_{d,\ell}}{
N_{d,\ell}/2}^2\binom{n+1}{\ell+1}^2\Delta,
$$
where
$$
\Delta
=
\binom{N_{d,\ell}}{
N_{d,\ell}/2}^{-1}\binom{n+1}{\ell+1}^{-1}
 \sum_{i=m_{n,\ell}}^{\ell-1} 
 2^{N_{d,i}} 
 \binom{n-\ell}{\ell-i} \binom{\ell+1}{i+1} 
 \binom{N_{d,\ell}-N_{d,i}}{
\lfloor (N_{d,\ell}-N_{d,i})/2\rfloor
}.
$$
It clearly follows from Lemma \ref{Lemma first moment} that 
\begin{equation}\label{eq:leaf}
\Sigma^{(2)}(d,n;\ell) \leq 
M_1(d,n;\ell)^2
\Delta.
\end{equation}

We now turn to an analysis of $\Delta$, beginning with the upper bound
\begin{equation}\label{eq:root}
\Delta\leq 
\binom{n+1}{\ell+1}^{-1}
 \sum_{i=0}^{\ell-1} 
 \binom{n-\ell}{\ell-i} \binom{\ell+1}{i+1} \Delta_i,
\end{equation}
where
$$
\Delta_i= 2^{N_{d,i}} 
\binom{N_{d,\ell}}{
N_{d,\ell}/2}^{-1}
 \binom{N_{d,\ell}-N_{d,i}}{
\lfloor (N_{d,\ell}-N_{d,i})/2\rfloor
}.
$$
Combining the second bound in \eqref{Upper bound Stirling binom} with 
\eqref{Estimate Stirling binom}, we obtain  
$$
\Delta_i\leq \left(1-\frac{N_{d,i}}{N_{d,\ell}}\right)^{-1/2}\left(1+O\left(\frac{1}{N_{d,i}}\right)\right).
$$
But, on recalling that $i\leq \ell-1$,  we observe that 
$$
\frac{N_{d,i}}{N_{d,\ell}}=\prod_{j=i+1}^\ell \frac{j}{d+j}\leq \frac{1}{2^{\ell-i}},
$$
since $\frac{j}{d+j}\leq 1/2$ for $j\leq \ell\leq d$. Hence
$
\Delta_i\leq 1+O( 2^{-(\ell-i)})$, 
from which it follows that 
$$
\Delta\leq 
\binom{n+1}{\ell+1}^{-1}
 \sum_{i=0}^{\ell-1} 
 \binom{n-\ell}{\ell-i} \binom{\ell+1}{i+1} \left(1+O\left( \frac{1}{2^{\ell-i}}\right)\right)
$$
in \eqref{eq:root}.
On the one hand, we have 
\begin{align*}
 \sum_{i=0}^{\ell-1} \binom{n-\ell}{\ell-i} \binom{\ell+1}{i+1}
 &= 
 \sum_{i=1}^{\ell} \binom{n-\ell}{1+\ell-i} \binom{\ell+1}{i}\\
 &=
 \sum_{i=0}^{\ell+1} \binom{n-\ell}{1+\ell-i} \binom{\ell+1}{i}-1-\binom{n-\ell}{\ell+1}\\
&= \binom{n+1}{\ell+1}-1-\binom{n-\ell}{\ell+1},
\end{align*}
by the  Vandermonde identity \eqref{Identity VDM}.
On the other hand, we see that 
\begin{align*}
\sum_{i=0}^{\ell-1} 
 \binom{n-\ell}{\ell-i} \binom{\ell+1}{i+1} \frac{1}{2^{\ell-i}}
&=
\sum_{i=1}^{\ell} 
 \binom{n-\ell}{\ell+1-i} \binom{\ell+1}{i} \frac{1}{2^{\ell+1-i}}\\
 &\leq 
\sum_{i=0}^{\ell+1} 
 \binom{n-\ell}{\ell+1-i} \binom{\ell+1}{i} \frac{1}{2^{\ell+1-i}}\\
 &=
\sum_{j=0}^{\ell+1} 
 \binom{n-\ell}{j} \binom{\ell+1}{j} \frac{1}{2^{j}}.
\end{align*}
Thus, in the light of Lemma \ref{lem:prob}, we deduce that 
$$
\binom{n+1}{\ell+1}^{-1}
\sum_{i=0}^{\ell-1} 
 \binom{n-\ell}{\ell-i} \binom{\ell+1}{i+1} \frac{1}{2^{\ell-i}}
\leq 
\left(\frac{3}{4}\right)^{\mu_\ell},
$$
where $\mu_\ell$ is given by \eqref{eq:soil}.
Putting this together in \eqref{eq:leaf}, we deduce that 
\begin{equation}\label{eq:stalk}
\begin{split}
\Sigma^{(2)}(d,n;\ell)
\leq~& M_1(d,n;\ell)^2\left(1
-\frac{1}{\binom{n+1}{\ell+1}}
-\frac{\binom{n-\ell}{\ell+1}}{\binom{n+1}{\ell+1}}\right) +O\left( 
\left(\frac{3}{4}\right)^{\mu_\ell}
\right).
\end{split}
\end{equation}

\subsubsection*{Analysis of $\Sigma^{(3)}(d,n;\ell)$}

We remark that reasoning exactly as in the proof of  \eqref{eq:3.2}, we deduce that 
\begin{equation*}
\sum_{\substack{V \in \mathbb{B}_{d,n} \\ x, y \in V(\mathbb{Q})}} 1 = 2^{N_{d,n}-2N_{d,\ell}-1} \binom{N_{d,\ell}}{N_{d,\ell}/2}^2,
\end{equation*}
for any  $(x, y) \in T_{n,\ell}$.
Recalling  \eqref{Cardinality Bdn} and \eqref{Cardinality Tnl}, we thus see that
\begin{equation}
\label{Equality Sigma3}
\begin{split}
\Sigma^{(3)}(d,n;\ell) 
&= \frac1{2^{2N_{d,\ell}-2\ell}} \binom{N_{d,\ell}}{N_{d,\ell}/2}^2 \binom{n+1}{\ell+1} \binom{n-\ell}{\ell+1}\\
&= M_1(d,n;\ell)^2  \frac{\binom{n-\ell}{\ell+1}}{\binom{n+1}{\ell+1}},
\end{split}
\end{equation}
by Lemma~\ref{Lemma first moment}.

\subsubsection*{Conclusion of the argument}

Noting that 
$\binom{n+1}{\ell+1} \geq n$ 
if $\ell < n$, we may now put together \eqref{Upper bound Sigma1}, \eqref{eq:stalk} and \eqref{Equality Sigma3} to eventually obtain
\begin{align*}
\sum_{\nu = 1}^3 \Sigma^{(\nu)}(d,n;\ell) \leq~& M_1(d,n;\ell)^2
\left(1+ \boldsymbol{1}_{\ell=n}
+O\left(\left(\frac{3}{4}\right)^{\min\{\ell,\mu_\ell\}} +\frac{1}{\min\{d,n\}}
\right)\right).
\end{align*}
Recalling the equality \eqref{Equality M2} we see that this completes the proof. 
\end{proof}

\section{Proof of the main results}\label{s:proofs}

Recall the definition \eqref{eq:moment} of the moments $M_k(d,n;\ell)$, for $k\geq 0$.
We are now in position to establish Theorems~\ref{Theorem all d}
and \ref{Theorem almost all d}. Both rely on the inequality
\begin{equation}
\label{Inequality CS}
M_1(d,n;\ell)^2 \leq r_{d,n} M_2(d,n;\ell),
\end{equation}
 for any $\ell \in \{1, \dots, n \}$,
which follows from the Cauchy--Schwarz inequality.

\begin{proof}[Proof of Theorem~\ref{Theorem all d}]
We assume that $d$ is sufficiently large and that $n\geq d+\log d$. Let $\ell \in \{1, \dots, d \}$ be  such that $N_{d,\ell}$ is even. Then, on applying Lemma~\ref{Lemma second moment} and using our assumption $n \geq d+\log d> d$, we obtain
\begin{equation}
\label{Inequality key}
1 \leq r_{d,n} \left( 1 + \boldsymbol{1}_{\ell=n} + O \left( 
\left(\frac{3}{4}\right)^{\min\{\ell,\mu_\ell\}}
+
\frac1{d} 
+\frac1{M_1(d,n;\ell)} \right) \right),
\end{equation}
where
$$
\mu_\ell=\frac{(n-\ell)(\ell+1)}{n+1}.
$$
Note that 
in the special case $d=n=\ell$, a repeated application of  \eqref{Estimate Stirling binom} yields
\begin{align*}
M_1(d,d;d)  \sim \frac{2^{d+1/2}}{\pi^{1/2} N_{d,d}^{1/2}}  
\sim \frac{2^{1/2} d^{1/4}}{\pi^{1/4}},
\end{align*}
whence $M_1(d,d;d)\gg d^{1/4}$.

We start by handling the case where $d$ is a power of $2$. 
The equality \eqref{Equality valuation} implies that $N_{d,d}$ is even, since
$
v_2(N_{d,d})=2s_2(d)-s_2(2d)=1.
$
Taking 
$\ell=d$,  we note that $\mu_d=(n-d)(d+1)/(n+1)$  is a strictly increasing function of $n$. Thus
$$
\mu_d\geq \frac{(d+1)\log d}{d+1+\log d}
\geq \frac{10\log d}{11},
$$
since $n\geq d+\log d$ and $1+(\log d)/(d+1)\leq 11/10$ for large enough values of $d$. 
But then it follows that 
$$
\left(\frac{3}{4}\right)^{\mu_d}\leq \frac{1}{d^{\theta}},
$$
where $\theta=10\log(4/3)/11>1/4$.
Since $r_{d,n}\leq 1$ and $d<n$,  the choice $\ell=d$ in  \eqref{Inequality key} therefore provides
\begin{equation*}
r_{d,n} = 1 + O \left( \frac1{d^{1/4}} + \frac1{M_1(d,n;d)} \right).
\end{equation*}
This  is satisfactory since $M_1(d,n;d) \geq M_1(d,d;d)\gg d^{1/4}$.

We now deal with the case where $d$ is not a power of $2$. We let $2^{k_0(d)}$ and $2^{k_1(d)}$ be  the largest and second largest powers of $2$, respectively,  
appearing in the binary expansion of $d$. It is clear that 
\begin{equation}
\label{Inequality d}
d \leq 1+2+\cdots+2^{k_1(d)-1} +2^{k_1(d)}+2^{k_0(d)}\leq 
2^{k_1(d)+1} + 2^{k_0(d)},
\end{equation}
on evaluating the first $k_1(d)$ terms as a geometric series. 
 Let 
\begin{equation*}
\ell =
\begin{cases}
2^{k_1(d)} & \textrm{if } 2^{k_1(d)} > d/7, \\
2^{k_1(d)} + 2^{k_0(d)-2} & \textrm{if } 2^{k_1(d)} \leq d/7.
\end{cases}
\end{equation*}
This choice ensures that the integers $d$ and $\ell$ share a common digit $1$ in their binary expansions. Indeed, otherwise we would have $2^{k_1(d)} \leq d/7$ and $k_0(d)-2 = k_1(d)$. But then \eqref{Inequality d} 
would yield $d \leq 6 \cdot 2^{k_1(d)} \leq 6d/7$, which is a contradiction.
It therefore follows from Lemma \ref{lem:N-even} that  $N_{d,\ell}$ is even. 

Next we claim that 
\begin{equation}\label{eq:lamp}
\frac{d}{7}<\ell \leq \frac{d}{2}.
\end{equation}
This is obvious when   $2^{k_1(d)} > d/7$, since then  
$\ell =2^{k_1(d)}$ and
$$
3\cdot 2^{k_1(d)}=2^{k_1(d)} +2^{k_1(d)+1}\leq 
2^{k_1(d)} +2^{k_0(d)}\leq d,
$$
which is more than enough to ensure $\ell\leq d/2$.
Suppose that 
$2^{k_1(d)} \leq  d/7$. Then 
$$
\ell=2^{k_1(d)} + 2^{k_0(d)-2}=\frac{3}{4}2^{k_1(d)} +\frac{1}{4}\left(2^{k_1(d)}+2^{k_0(d)}\right) 
\leq \left(\frac{3}{4}\cdot \frac{1}{7} +\frac{1}{4}\right)d \leq \frac{d}{2}.
$$
It follows from  \eqref{Inequality d} 
that $2^{k_0(d)}\geq d-2^{k_1(d)+1}$. Hence
$$
\ell=2^{k_1(d)} + 2^{k_0(d)-2}\geq 2^{k_1(d)} +\frac{1}{4}\left(d-2^{k_1(d)+1}\right) 
\geq \frac{d}{4}>\frac{d}{7}.
$$
This completes the verification of \eqref{eq:lamp}.

For $n\geq d+\log d> d$, we see that 
$$
\mu_\ell=\frac{(n-\ell)(\ell+1)}{n+1}\geq \frac{(d-\ell)(\ell+1)}{d+1},
$$
since the left hand side is  a strictly increasing function of $n$. 
Applying \eqref{eq:lamp}, we may set
\begin{equation}\label{eq:goaty}
\lambda = \frac{\ell}{d} \in  \left(\frac{1}{7}, \frac{1}{2}\right].
\end{equation}
The inequality  $\mu_\ell\geq d/10$ happens if 
$10(1-\lambda)d\lambda +10(1-\lambda)\geq d+1$.
But $10(1-\lambda)\geq 10\cdot 1/2\geq 1$, by \eqref{eq:goaty}.  
Hence it suffices to check that 
$10(1-\lambda)\lambda \geq 1$, which is true for all $\lambda$ in the range \eqref{eq:goaty}.
Thus it follows that 
$\min\{\ell,\mu_\ell\}\geq d/10$. 

Note that 
$(3/4)^{d/10}<1/d$   for sufficiently large $d$.
Appealing to the inequality \eqref{Inequality key} and noting that $r_{d,n}\leq 1$,
 we finally obtain
\begin{equation}
\label{Estimate rdn}
r_{d,n} = 1 + O \left( \frac1{d} +\frac1{M_1(d,n;\ell)} \right).
\end{equation}
Adopting the notation \eqref{eq:goaty},  
it follows from \eqref{Estimate Stirling binom} and 
Lemma~\ref{Lemma first moment}  that
\begin{equation*}
M_1(d,n;\ell) \gg 2^{\lambda d} \binom{(1+\lambda)d}{d}^{-1/2} \binom{n}{\lambda d}.
\end{equation*}
Appealing to Stirling's formula \eqref{Stirling} and using our assumption $n \geq  d+\log d \geq d$, we thus get 
\begin{align*}
M_1(d,n;\ell) & \gg 2^{\lambda d} \frac{d^{d/2+1/4}(\lambda d)^{\lambda d/2+1/4}}{((1+\lambda)d)^{(1+\lambda)d/2+1/4}} \cdot \frac{d^{ d+1/2}}{(\lambda d)^{\lambda d+1/2} ((1-\lambda) d)^{(1-\lambda) d+1/2}} \\
& \gg \left( \frac{2^{\lambda} }{\lambda^{\lambda/2} (1+\lambda)^{(1+\lambda)/2} (1-\lambda)^{1-\lambda}} \right)^d \frac1{d^{1/4}}.
\end{align*}
A numerical check now shows that 
\begin{equation*}
\frac{2^{\lambda} }{\lambda^{\lambda/2} (1+\lambda)^{(1+\lambda)/2} (1-\lambda)^{1-\lambda}} > 1,
\end{equation*}
for any $\lambda \in (1/7, 1/2]$. 
Recalling the estimate \eqref{Estimate rdn}, this therefore  completes the proof of Theorem~\ref{Theorem all d}.
\end{proof}

\begin{proof}[Proof of Theorem~\ref{Theorem almost all d}]

Throughout the proof we may assume that 
$n \geq d^{1/2}
\log d$. 
We take
\begin{equation*}
\mathcal{D} = \left\{ d \geq 1 : v_2(d) \leq \log \log \log d \right\},
\end{equation*}
and we  start by noting that
\begin{equation*}
\# \{ d \leq X : d \notin \mathcal{D} \} \ll \frac{X}{\log \log X}.
\end{equation*}
Thus $\mathcal{D}$ has density $1$, as required for
Theorem~\ref{Theorem almost all d}.

For any $d \in \mathcal{D}$,
 we now assume that $\ell$ is a positive integer 
such that $ \ell < d^{1/5}$  and  $N_{d, \ell}$ is even.
In particular $\ell<n$, since $n \geq d^{1/2}
\log d$. Combining the inequality \eqref{Inequality CS} and Lemma~\ref{Lemma second moment}, we therefore deduce that
\begin{equation}
\label{Lower bound r}
1 \leq  r_{d,n} + O \left( \frac1{\sqrt{d}}+\left(\frac{3}{4}\right)^{\min\{\ell,\mu_\ell\}} +\frac1{M_1(d,n;\ell)} \right) ,
\end{equation}
where $\mu_\ell=(n-\ell)(\ell+1)/(n+1)$. Note that 
$\mu_\ell\geq \ell$ if and only if $n\geq \ell^2+2\ell$.
But this is obviously satisfied if  $n \geq d^{1/2}
\log d$ and $\ell<d^{1/5}$, assuming that $d$ is sufficiently large.

We next focus on a lower bound for $M_1(d,n;\ell)$.
Writing that $2^{\ell} \geq \ell+1$, 
it follows from \eqref{Estimate Stirling binom} and 
Lemma~\ref{Lemma first moment}  that
\begin{align*}
M_1(d,n;\ell) & \gg \frac{n}{N_{d,\ell}^{1/2}} \binom{n}{\ell} 
 = \frac{n}{(\ell!)^{1/2}} \cdot \frac{n!}{(n-\ell)!} \cdot \frac{(d!)^{1/2}}{((d+\ell)!)^{1/2}}.
\end{align*}
Appealing to Stirling's formula \eqref{Stirling}, we deduce that
\begin{align*}
M_1(d,n;\ell) & \gg \frac{n}{\ell^{\ell/2+1/4}
} \cdot \frac{n^{n+1/2}}{(n-\ell)^{n-\ell+1/2} } \cdot \frac{d^{d/2+1/4}}{(d+\ell)^{(d+\ell)/2+1/4}} \\
& \gg 
\frac{n}{\ell^{\ell/2+1/4}}
\left( \frac{n}{d^{1/2}} \right)^{\ell} \left(1-\frac{\ell}{n} \right)^{-n+\ell}  \left(1+\frac{\ell}{d} \right)^{-(d+\ell)/2}.
\end{align*}
Using our assumptions $\ell < d^{1/5}$ and $n \geq d^{1/2}\log d$,  we claim that 
\begin{equation*}
\left(1-\frac{\ell}{n} \right)^{-n+\ell} \left(1+\frac{\ell}{d} \right)^{-(d+\ell)/2} \gg e^{\ell/2}.
\end{equation*}
Taking logarithms of both sides, we note that this  is equivalent to proving 
$$
(-n+\ell) \log\left(1-\frac{\ell}{n}\right) - \frac{d+\ell}{2} \log\left(1+\frac{\ell}{d}\right) \geq \frac{\ell}{2}+O(1).
$$
But $\log(1-\ell/n)=-\ell/n+O(\ell^2/n^2)$ and 
$\log(1+\ell/d)=\ell/d+O(\ell^2/d^2)$. Hence the claim follows under our assumption that 
$\ell < d^{1/5}$ and $n \geq d^{1/2}\log d$.
Using  our assumption $n \geq d^{1/2}\log d$, we finally obtain
\begin{equation*}
M_1(d,n;\ell) \gg 
\frac{n 
(\log d)^{\ell}
e^{\ell/2}}{\ell^{\ell/2+1/4}}\gg 
\frac{d^{1/2}
(\log d)^\ell}{\ell^{\ell/2}}.
\end{equation*}
Returning to 
\eqref{Lower bound r} and observing that $r_{d,n}\leq 1$, we therefore obtain
\begin{equation}\label{eq:shone}
r_{d,n} =1+ O \left( \frac1{\sqrt{d}}+\left(\frac{3}{4}\right)^\ell+
\frac{\ell^{\ell/2}}{d^{1/2}
{(\log d)^{\ell}}}
 \right) .
\end{equation}

It is now time to choose a suitable value of $\ell$. 
We need to ensure that $N_{d,\ell}$ is even, that $\ell<d^{1/5}$ and that the error terms in \eqref{eq:shone} are satisfactory for 
Theorem~\ref{Theorem almost all d}.
We shall assume that $d$ is sufficiently large. 
Let 
\begin{equation*}
\ell = 2^{v_2(d)} + 
2^{\lfloor \frac{\log\log d}{\log 2} \rfloor+3}.
\end{equation*}
Since 
$$
v_2(d)\leq \log\log\log d<
\left\lfloor \frac{\log\log d}{\log 2} \right\rfloor+3
$$ 
for sufficiently large $d\in \mathcal{D}$, 
so it follows that 
$d$ and $\ell$ share a common digit $1$  in their binary expansion. Hence 
Lemma \ref{lem:N-even} ensures that 
$N_{d,\ell}$ is even. Moreover 
$$
\ell \leq 2^{\log\log\log d} + 2^{\log\log d/\log 2+3} \leq \log\log d + 8\log d
$$
and $\ell\geq 2^{\log\log d/\log2+2}=4\log d$.
It follows that $4\log d\leq \ell \leq 9\log d$ for 
sufficiently large $d\in \mathcal{D}$, which is more than enough to ensure that
 $\ell<d^{1/5}$.

Turning to the three error terms in 
\eqref{eq:shone}, the first is   clearly satisfactory. The second is satisfactory on observing that 
$4\log(4/3)> 1/2$.
To handle the third, we note that 
\begin{align*}
\frac{\ell^{\ell/2}}{d^{1/2}(\log d)^\ell} =
\frac{1}{d^{1/2}} \left(\frac{\ell^{1/2}}{\log d}\right)^\ell
&\leq 
\frac{1}{d^{1/2}} \left(\frac{3}{\sqrt{\log d}}\right)^\ell,
\end{align*}
on taking $\ell \leq 9\log d$. This is at most $1/\sqrt{d}$ for 
sufficiently large $d\in \mathcal{D}$, 
which therefore  completes the proof of Theorem~\ref{Theorem almost all d}.
\end{proof}

\end{document}